\newtheorem{definition}{Definition}
\newtheorem{proposition}{Proposition}
\newtheorem{theorem}{Theorem}
\newtheorem{lm}{Lemma}
\newtheorem{remark}{Remark}
\newcommand{\eps}{\varepsilon}
\newcommand{\1}{{\bf 1}}
\newcommand{\card}{{\rm card}}
\newcommand{\Xp}{X_{\bullet}}
\newcommand{\Yp}{Y_{\bullet}}
\newcommand{\Pro}{\mathbb{P}}
\begin{document}

\title{Consistency of plug-in confidence sets for \\
classification in semi-supervised learning}
 \author{Christophe Denis\footnote{Christophe.Denis@u-pem.fr}  \, and Mohamed Hebiri\footnote{Mohamed.Hebiri@u-pem.fr}\\
\small{ LAMA, UMR-CNRS 8050,}\\
\small{Universit\'{e} Paris Est -- Marne-la-Vall\'ee}
}
 \date{}
\maketitle

\begin{abstract}
Confident prediction is highly relevant in machine learning; for example,
in applications such as medical diagnoses, wrong prediction can be fatal.
For classification, there already exist procedures that allow to not 
classify data when the confidence in their prediction is weak. 
This approach is known as classification with reject option.
In the present paper, we provide new methodology for this approach.
Predicting a new instance via a confidence set, 
we ensure an exact control of the probability of classification.
Moreover, we show that this methodology is easily implementable
and entails attractive theoretical and numerical properties. 
 \vspace*{0.25cm} \\
\textit{Keywords} : Classification, classification with reject option, conformal predictors,\\ 
\hspace*{1.60cm}  confidence sets,  plug-in confidence sets.
\end{abstract}

\section{Introduction}
\label{Sec:Intro}

Binary classification aims at assigning a label $Y \in \{0,1\}$ to a given example $X \in \mathcal{X}$.
The goal is then to build a classification rule $s : \mathcal{X} \to \{0,1\}$ so that $s(X)$, the predicted label
for the observed example $X$ is as close as possible to the label $Y$.
In this framework the question of confident prediction, which results in wondering 
how accurate is the prediction $s(X)$, becomes a central question.
Doubts about the confidence of the predicted label $s(X)$ may arise in these situations: if
the conditional probability $\eta^* (x) = \mathbb{P}(Y=1 | X=x)$ is close to $1/2$ so that the feature $x$ might be hard to classify
whatever the classification rule is; or, if the classification rule is inefficient.
In such cases, it is worth considering procedures that allow to not classify an observation when the doubt is too important. 
We talk about 
{\it classification with reject option}. This setting is particularly relevant in some applications where wrong classification may lead to big issues:
it is hence better to not assign a label rather than to assign a non confident one. 
Procedures for classification with reject option has been studied by several authors~\cite{Ch70,NZH10,GRAN09CI,Vovk99IntroCP,Vovk_livre,HW06,BW08,WY11,LeiConfTheori}
and references therein. In this context, two questions arise:
how to determine whether we should classify an example or not; and how to take into account the reject option?
The works on classification with reject option can be separated according to two approaches.

i) The works which rely on the {\it conformal predictors} algorithm~\cite{Vovk99IntroCP,Vovk_livre}.
The general idea of conformal prediction is to build for a given feature $X$ a set $\Gamma(X)$, 
which takes its value in $\{ \emptyset, \{0\},\{1\},\{0,1\}\}$,
and contains the true label with high probability. 
The feature $X$ is not classified if $\card(\Gamma(X)) \neq 1$.
One of the most important ideas behind the construction of conformal predictors is the notion of conformity.
More precisely, the value of $\Gamma(X)$ depends on the similarity between the example $X$ and an already collected labeled dataset.
Then, the procedure uses local arguments and can be seen as a transductive method \cite{localEstimation,VapnikLivre}.
In terms of performance, the set $\Gamma(X)$ is built in order to control the overall misclassification risk:
for a given significance level $\varepsilon \in (0,1)$, the set $\Gamma(X)$ satisfies\footnote{In the terminology of conformal predictors, both of the outputs $\emptyset$ and $ \{0,1\}$ mean that no label is assigned. Both are important to be able to guarantee the exact control of the overall misclassification risk regardless of the classification rule used to build the set $\Gamma(X)$.}
$\mathbb{P}\left( Y  \notin \Gamma(X)  \right) \leq \eps$.
The major drawback of the conformal prediction approach is that it does not take into account the reject option in the risk. 
Moreover, if the significance level $\eps$ is too small, the
resulting set $\Gamma(X)$ belongs to $\{\emptyset, \{0,1\}\}$ for all $X$.
Hence, the use of reject option is irrelevant.

ii) The other works rely on the setting provided in~\cite{Ch70,HW06,BW08,WY11}. In this case, a {\it classification rule with reject option} 
$s_R$ takes its values in $\{0,1,R_e\}$, where $s_R(X) = R_e$ means reject: no label is affected to the instance $X$. 
In the above mentioned works, rejecting is viewed as an error and for a fixed value of some parameter
$\alpha \in [1/2,1]$, the cost of the rejection is $1- \alpha$. Therefore, the risk function associated
with a classifier with reject option is given by
$L_{\alpha}(s_R) = \mathbb{P}\left(\{{s}_R(X) \neq Y \}  \  \text{and} \  \{X \text{ is classified} \} \right) + (1-\alpha)   \mathbb{P}\left(\{X \text{ is rejected} 
\} \right)$.
The results provided in~\cite{Ch70} illustrate that the optimal reject procedure for $L_{\alpha}$ is given by 
\begin{equation*}
s^{*}_{R_{\alpha}}(X) = 		\begin{cases}
			0 \;\; \qquad {\rm if} \; \eta^*(X) \leq 1- \alpha, \\
			1 \;\; \qquad {\rm if} \; \eta^*(X) \geq \alpha, \\
			R_e \qquad {\rm otherwise}.
		\end{cases} 
\end{equation*}
Herbei and Wegkamp~\cite{HW06} study the asymptotic optimality of procedures based on plug-in rules or on empirical risk minimization.
We address some limits of this approach. First, the choice of the parameter $\alpha$ is fundamental for the procedure
and fixing it is tricky. As an immediate consequence, if the value of parameter $\alpha$ is either too small or too large, the use of reject option
can be irrelevant. Moreover, this approach does not allow to control any of the two parts of the risk function, in particular the rejection probability. 
Hence, comparing two classifiers with reject option in terms of the risk function $L_{\alpha}$ remains difficult to interpret: they do not have necessarily
the same rejection probabilities.

Both approaches previously presented bring into play the reject option through rather a set (conformal predictor) or a classifier with reject option. However, none provides a control on the probability of classifying a feature.
In the present paper we consider a new way to tackle the problem of classification with reject option.
We aim at controlling the rejection probability and at bounding the misclassification risk restricted to the set of label examples. Both considerations are new.
For a given classifier $s$ and a feature $X$, our methodology involves a statistical procedure which provides 
a set $\Gamma_s(X) \in \{\{0\},\{1\},\{0,1\}\}$, namely a {\it confidence set}.
We introduce in the present work oracle confidence sets, says $\Gamma^*$, which relies on a score function deduced from $\eta^*$ and also on its cumulative distribution function. 
The main characteristic of oracle confidence sets is that they are able to control exactly the rejection probability $\mathbb{P}(\Gamma^*(X)  = \{0,1\})$: under mild assumption, we get level-$\eps$-confidence sets.
These sets are called $\eps$-confidence sets. This aspect makes our procedure prevent irrelevant use of reject option.
Hence we do not view the reject as an error but simply as a parameter that we are able to control; 
moreover, we evaluate the quality of a confidence set through the misclassification risk conditionally on the set of classified
examples. That is,
for a given classification rule $s$, we focus on the control of the risk function  
$\mathcal{R}(\Gamma_s) = \mathbb{P}\left(Y \in \Gamma_s(X)  \ | \  \{X \text{ is classified} \} \right)$.
To the best of our knowledge, none of the earlier works provides a control of this risk neither of the rejection probability.
According to the risk function $\mathcal{R}$, for $\eps \in ]0,1]$, the $\eps$-confidence sets are shown to be optimal 
over the set of all confidence sets with rejection probability equal to $1- \eps$.
Another contribution of the paper is to provide an algorithm which involves a consistent estimator of $\eta^*$ and yields a confidence set.
For a given level $\eps\in ]0,1]$, we do not build only a single algorithm of constructing asymptotically level-$\varepsilon$-confidence sets, but a general device
that takes as input a consistent estimator of the regression function and a unlabeled sample, and produces as output a confidence set which is provably asymptotically of level $\varepsilon$ and consistent (i.e., the excess risk tends to zero).
The resulting confidence set is referred as plug-in $\eps$-confidence set.
Furthermore, we establish rates of convergence under the Tsybakov noise assumption on the data generating distribution.
Moreover, these confidence sets have the advantage of being easily implementable.

The rest of the paper is organized as follows.
The definition and the important properties of the $\eps$-confidence sets are provided in Section~\ref{sec:ourConfSet}.
We also apply the $\eps$-confidence sets in the Gaussian mixture model.
Section~\ref{sec:Estimateion} is devoted to the introduction of the plug-in $\eps$-confidence sets and their asymptotic behavior.
We present a numerical illustration of our results in Section~\ref{sec:simu}. We finally draw some conclusions and present perspectives 
of our work in Section~\ref{sec:conclusion}.
Proofs of our results are postponed to the Appendix.

\bigskip

\noindent { \it Notation:} First, we state general notation.
Let $(X ,Y)$  be the generic data-structure taking its values in $\mathcal{X} \times \{0,1\}$
with distribution $\mathbb{P}$. Let $(X_{\bullet}, 
Y_{\bullet})$ be a random variable independent of $(X,Y)$ and with the same law as $(X,Y)$. 
The goal in classification is to predict the label $Y_{\bullet}$ given an observation of $X_{\bullet}$.
This is performed based on a classifier (or classification rule) $s$ which is a function mapping $\mathcal{X}$ onto 
$\{0, 1 \}$. Let $\mathcal{S}$ be the set of all classifiers.
The misclassification risk $R$ associated with $s \in \mathcal{S}$ is defined as
\begin{equation*}
R(s) = \mathbb{P}(s(X) \neq Y).
\end{equation*} 
Moreover, the minimizer of $R$ over $\mathcal{S}$ is the Bayes classifier, denoted by $s^*$, and is characterized by
\begin{equation*}
s^{*}(\cdot) = \1\{\eta^*(\cdot) \geq 1/2\},
\end{equation*}
where $\eta^*(x) = \mathbb{P}(Y = 1 | X = x)$ for $x \in \mathcal{X}$.
One of the most important quantities in our methodology is the function $f^*$ defined by
$f^*(\cdot) = \max\{\eta^*(\cdot), 1 - \eta^*(\cdot)\}$.
It will play the role of a score function.
\\
Let us now consider more specific notation related to the classification with reject option setting. Let $s\in \mathcal{S}$ be a classifier.
A confidence set $\Gamma_s$ associated with the classifier $s$
is defined as a measurable function that maps $\mathcal{X}$
onto $\{\{0\}, \{1\}, \{0,1\}\}$, such that for an example $\Xp$, the set $\Gamma_s(\Xp)$ can be either $\{s(\Xp)\}$ or $\{0,1\}$. 
We decide to classify the example $\Xp$, according to the label $s(\Xp)$, if 
$\card\left(\Gamma_s(\Xp)\right) = 1$. In the case where $\Gamma_s(\Xp) = \{0,1\}$, 
we decide to not classify (reject) the feature $\Xp$.
Let $\Gamma_s$ be a confidence set. The probability of classifying 
a feature is denoted by
\begin{equation}
	\label{Sec2:ProbaClass}
		\mathcal{P}\left(\Gamma_s\right) := \Pro\left(\card\left(\Gamma_s(\Xp)\right) = 1\right).
\end{equation}
In our approach, the probability of classifying $\mathcal{P}\left(\Gamma_s\right)$ is not viewed as a success or an error but simply as a parameter that we have to control.
Hence, the definition of a confidence set makes natural the following definition of the risk associated with $\Gamma_s$:
\begin{eqnarray}
	\label{Sec2:RiskConfidenceSet}
		\mathcal{R}\left(\Gamma_s\right) & = & 
		\mathbb{P}\left(\Yp \notin \Gamma_s(\Xp) | \card\left(\Gamma_s(\Xp)\right) = 1\right) \nonumber \\ 
		& = & \mathbb{P}\left(s(\Xp) \neq \Yp | \card\left(\Gamma_s(\Xp)\right) = 1\right).
\end{eqnarray}
The risk $\mathcal{R}\left(\Gamma_s\right)$ is the misclassification error risk of $s$
conditional to the event that $\Xp$ is classified.
Moreover,  for some $\varepsilon \in ]0,1]$, we say that, for two confidence sets $\Gamma_s$ and $\Gamma_{s^{'}}$ such that 
$\mathcal{P}\left(\Gamma_s\right)=\mathcal{P}\left(\Gamma_{s^{'}}\right) = \varepsilon$, the confidence set $\Gamma_s$ is ``better'' than $\Gamma_{s^{'}}$
if $\mathcal{R}\left(\Gamma_s\right) \leq \mathcal{R}\left(\Gamma_{s^{'}}\right)$.

\section{$\varepsilon$-confidence sets}
\label{sec:ourConfSet}

In the present section, we define a class of confidence sets referred as {\it $\varepsilon$-confidence sets} which are optimal according to the definition of risk~\eqref{Sec2:RiskConfidenceSet}.
We always keep in mind that the classification probability~\eqref{Sec2:ProbaClass} will be under control.
In Section~\ref{subsec:espConfSetDef}, we define and state the important properties of the class of $\eps$-confidence sets.
We then apply the $\eps$-confidence sets to the Gaussian mixture case in Section~\ref{subsec:gaussianMixture}.
We end up this section with a comparison to classifiers with reject option in Section~\ref{subsec:classR}.

\subsection{Definition and properties}
\label{subsec:espConfSetDef}

The definition of $\eps$-confidence sets relies on the Bayes classifier $s^{*}$ and the cumulative distribution function of $f^*(X)$.
\begin{definition}\label{def:Epsilonconfset}
	Let  $\varepsilon \in ]0,1]$, the $\varepsilon$-confidence set is defined as follows
	\begin{equation*}
		\Gamma_{\varepsilon}^{\bullet}(\Xp) =
		\begin{cases}
			\{s^{*}(\Xp)\} \;\;  {\rm if} \; F_f^*({f^*}(\Xp)) \geq 1-\varepsilon \\
			\{0,1\} \; \;\; \quad {\rm otherwise},
		\end{cases} 
	\end{equation*}
where $F_f^*$ is the cumulative distribution function of $f^*(X)$ and $f^*(\cdot)= \max \{ \eta^*(\cdot),1-\eta^*(\cdot)\}$.
\end{definition}
According to this definition, the construction of the $\varepsilon$-confidence sets relies on two important features.
First, if a label is assigned to a new feature $\Xp$ by the $\varepsilon$-confidence set, it is the 
one provided by the Bayes classifier $s^{*}$.
Second, we assign a label to a new data $\Xp$ if the corresponding score $f^*(\Xp)$ is large enough regarding the distribution of $f^*(X)$.
This is one of the key ideas behind conformal predictors introduced in~\cite{Vovk_livre}.

The following assumption is fundamental to establish theoretical guarantees.
{\it 
\begin{itemize}
\item[{\bf (A1)}] The cumulative distribution function $F^*_f$ of $f^*(X)$ is continuous.
\end{itemize}
}
One of the main motivations of the introduction of the $\varepsilon$-confidence set is that, if Assumption~(A1) 
holds, the procedure ensures an exact control of the probability~\eqref{Sec2:ProbaClass} of assigning a label
\begin{equation}
\label{eq:controlNonClass}
		\mathcal{P}(\Gamma^{\bullet}_{\varepsilon}) = \mathbb{P}(F_f^* \left( f^*(X_\bullet) \right)  \geq 1-\varepsilon) = \varepsilon.
\end{equation}
This happens since $F_f^*(f^*(\Xp))$ is uniformly distributed under Assumption (A1). 
Moreover, under this assumption as well, one can rewrite the $\varepsilon$-confidence sets in a different way.
Indeed,
for $\epsilon \in ]0,1[$, we have
\begin{equation*}
	F_f^*\left(f^*(\Xp)\right) \geq 1 - \varepsilon \Longleftrightarrow f^*\left(\Xp\right) \geq (F_f^*)^{-1}(1-\varepsilon),
\end{equation*}
where $(F_f^*)^{-1}$ denotes the generalized inverse of the cumulative distribution function $F_f^*$ (see \cite{VanderVaartLivre}). Therefore, if we set $\alpha_{\varepsilon} = (F_f^*)^{-1}(1-\varepsilon)$ 
for $\varepsilon \in ]0,1[$ and $\alpha_1 = 1/2$, Definition~\ref{def:Epsilonconfset} is equivalent to
\begin{equation}
	\label{eq:confoptEquivalent}
	\Gamma^{\bullet}_{\varepsilon}(\Xp) =
		\begin{cases}
			\{s^{*}(\Xp)\} \quad {\rm if} \; f^*(\Xp) \geq \alpha_{\varepsilon} \\
			\{0,1\} \;\;   \qquad {\rm otherwise}.
		\end{cases} 
\end{equation}
Next, we provide the most important property of the $\varepsilon$-confidence sets: 
\begin{proposition}
	\label{prop:prop2}
	Denote by $  \rm{\bf{\Gamma_{\varepsilon}}} $ the set $ \rm{\bf{\Gamma_{\varepsilon}}} = \{\Gamma_s; \; \mathcal{P}(\Gamma_s) = \varepsilon\}$.	
	Let Assumption (A1) be satisfied.
		\begin{enumerate}
			\item For any $\varepsilon \in ]0,1]$, the $\varepsilon$-confidence set satisfies the following property:
\begin{equation*}
\mathcal{R}\left(\Gamma_{\varepsilon}^{\bullet}\right) =  \min_{\Gamma_{s} \in {\rm{\bf{\Gamma_{\varepsilon}}}}} \mathcal{R}\left(\Gamma_s\right).
\end{equation*}	
			\item For $\varepsilon \in ]0,1]$ and for any $\Gamma_s \in \rm{\bf{\Gamma_{\varepsilon}}} $, the following holds
\begin{multline}
\label{decompRisk}
0\leq \mathcal{R}\left(\Gamma_s\right) - \mathcal{R}\left(\Gamma_{\varepsilon}^{\bullet}\right)  
= 
\dfrac{1}{\varepsilon} 
\left\{  \mathbb{E}\left[|2 \eta^*(\Xp) - 1 | 
\mathbf{1}_{\mathcal{C} }\right] 
+   
 \right.
\\ 
\mathbb{E}\left[|\eta^*(\Xp) - \alpha_{\varepsilon} | \mathbf{1}_{ \mathcal{A}_0 \cup \mathcal{B}_0 }   \right]  
+ \mathbb{E}\left[|1 - \eta^*(\Xp) - \alpha_{\varepsilon} | \mathbf{1}_{\mathcal{A}_1 \cup \mathcal{B}_1}   \right]\},
\end{multline}
where $\alpha_{\varepsilon} = (F_f^*)^{-1}(1-\varepsilon)$ and
{\footnotesize
\begin{eqnarray*}
\mathcal{A}_y & = &  \{f^*(\Xp) \geq \alpha_{\varepsilon},  \card(\Gamma_s(\Xp)) \neq 1, s^{*}(\Xp) \neq y\},  \;\; y = 0,1, \\
\mathcal{B}_y & = &  \{f^*(\Xp) < \alpha_{\varepsilon},  \card(\Gamma_s(\Xp)) = 1,  s(\Xp) \neq y\}, \;\; y = 0,1,\\
\mathcal{C} & = & \{f^*(\Xp) \geq \alpha_{\varepsilon},  \card\left(\Gamma_s(\Xp)\right) = 1,  s^{*}(\Xp) \neq s(\Xp)\}. \\
\end{eqnarray*}
}
\end{enumerate}
\end{proposition}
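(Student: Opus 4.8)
The plan is to compute $\mathcal{R}(\Gamma_s)$ for an arbitrary $\Gamma_s \in \mathbf{\Gamma_{\varepsilon}}$ and for $\Gamma_\varepsilon^\bullet$, then compare. Since $\mathcal{P}(\Gamma_s)=\varepsilon$ by hypothesis and $\mathcal{P}(\Gamma_\varepsilon^\bullet)=\varepsilon$ by \eqref{eq:controlNonClass}, both risks have the same denominator $\varepsilon$, so it suffices to analyze the numerators
\[
N(\Gamma_s) = \mathbb{P}\bigl(s(\Xp)\neq \Yp,\ \card(\Gamma_s(\Xp))=1\bigr)
= \mathbb{E}\bigl[\,(1-\eta^*(\Xp))\mathbf{1}\{s(\Xp)=1\} + \eta^*(\Xp)\mathbf{1}\{s(\Xp)=0\}\ \big|\ \card(\Gamma_s(\Xp))=1\bigr]\cdot\varepsilon,
\]
conditioning on $\card(\Gamma_s(\Xp))=1$ and expanding $\mathbb{P}(\Yp\neq s(\Xp)\mid \Xp)$ as the usual pointwise misclassification cost $\min$-type expression. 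First I would write $N(\Gamma_s) = \mathbb{E}[g_s(\Xp)\mathbf{1}\{\card(\Gamma_s(\Xp))=1\}]$ where $g_s(x) = (1-\eta^*(x))s(x) + \eta^*(x)(1-s(x))$, and similarly $N(\Gamma_\varepsilon^\bullet) = \mathbb{E}[g_{s^*}(\Xp)\mathbf{1}\{f^*(\Xp)\geq\alpha_\varepsilon\}]$ using the equivalent form \eqref{eq:confoptEquivalent}; note $g_{s^*}(x) = 1 - f^*(x)$.

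Next I would form the difference $N(\Gamma_s) - N(\Gamma_\varepsilon^\bullet)$ and partition the sample space $\mathcal{X}$ according to the two indicator events $\{f^*(\Xp)\geq\alpha_\varepsilon\}$ versus $\{f^*(\Xp)<\alpha_\varepsilon\}$ and $\{\card(\Gamma_s(\Xp))=1\}$ versus $\{\card(\Gamma_s(\Xp))\neq 1\}$. On the region where both events agree (both classify, with $f^*\geq\alpha_\varepsilon$) the contribution reduces to $\mathbb{E}[(g_s - g_{s^*})(\Xp)\mathbf{1}\{\cdots\}]$, which is supported on $\mathcal{C}$ and equals $\mathbb{E}[|2\eta^*(\Xp)-1|\mathbf{1}_{\mathcal{C}}]$ since on $\mathcal{C}$ we have $s(\Xp)\neq s^*(\Xp)$ so $g_s - g_{s^*} = |1-2\eta^*(\Xp)|$ (the "wrong" label costs $f^*$, the Bayes label costs $1-f^*$, difference $2f^*-1 = |2\eta^*-1|$). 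The remaining regions are where exactly one of the two confidence sets classifies. Where $\Gamma_\varepsilon^\bullet$ classifies but $\Gamma_s$ does not (i.e.\ $f^*\geq\alpha_\varepsilon$, $\card(\Gamma_s)\neq 1$) we lose the term $g_{s^*}(\Xp) = 1-f^*(\Xp)$ from $N(\Gamma_\varepsilon^\bullet)$; where $\Gamma_s$ classifies but $\Gamma_\varepsilon^\bullet$ does not (i.e.\ $f^*<\alpha_\varepsilon$, $\card(\Gamma_s)=1$) we gain $g_s(\Xp)$ in $N(\Gamma_s)$. Splitting each of these two regions further by the value of $s^*(\Xp)$ (resp.\ $s(\Xp)$) into the $\mathcal{A}_y$ and $\mathcal{B}_y$ pieces, and writing $g_{s^*} = 1-f^* = \eta^*\mathbf{1}\{s^*=0\} + (1-\eta^*)\mathbf{1}\{s^*=1\}$ and similarly for $g_s$, I would group terms to recognize $\eta^*(\Xp) - \alpha_\varepsilon$ on $\mathcal{A}_0\cup\mathcal{B}_0$ and $(1-\eta^*(\Xp)) - \alpha_\varepsilon$ on $\mathcal{A}_1\cup\mathcal{B}_1$.

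The key point making this work is the measure-balancing identity forced by $\mathcal{P}(\Gamma_s)=\mathcal{P}(\Gamma_\varepsilon^\bullet)=\varepsilon$: the probability of the region where $\Gamma_s$ classifies and $\Gamma_\varepsilon^\bullet$ does not equals the probability of the region where $\Gamma_\varepsilon^\bullet$ classifies and $\Gamma_s$ does not. This lets me introduce the constant $\alpha_\varepsilon$ "for free" — subtracting $\alpha_\varepsilon$ times the (equal) probabilities of these two exchanged regions — which is exactly what converts the bare terms $1-f^*$ and $g_s$ into the signed quantities $\eta^*-\alpha_\varepsilon$, $(1-\eta^*)-\alpha_\varepsilon$, and then into absolute values by checking signs: on $\mathcal{A}_y$ one has $f^*\geq\alpha_\varepsilon$ with $s^*\neq y$, so the relevant $\eta^*$ or $1-\eta^*$ is on the favorable side of $\alpha_\varepsilon$; on $\mathcal{B}_y$ one has $f^*<\alpha_\varepsilon$, giving the opposite sign; in both cases the contribution has constant sign and can be written with $|\cdot|$. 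Collecting the three groups yields \eqref{decompRisk}, and since every term on the right-hand side is a nonnegative expectation, the difference is $\geq 0$, which simultaneously proves part 1 (optimality of $\Gamma_\varepsilon^\bullet$ over $\mathbf{\Gamma_\varepsilon}$). The main obstacle is purely bookkeeping: carefully tracking the eight sub-regions and verifying the sign of each absolute-value term; a clean way to organize it is to treat the two labels $y=0,1$ symmetrically and handle the $s^*$ versus $s$ dichotomy via the identity $f^*(x) = \max\{\eta^*(x),1-\eta^*(x)\}$ together with $s^*(x)=\mathbf{1}\{\eta^*(x)\geq 1/2\}$.
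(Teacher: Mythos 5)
Your plan is essentially the paper's own proof: both compare the numerators (the denominators being equal to $\varepsilon$), split the difference over the region where both sets classify (yielding the $|2\eta^*(\Xp)-1|$ term on $\mathcal{C}$) and the two exchanged regions $\mathcal{A}_0\cup\mathcal{A}_1$ and $\mathcal{B}_0\cup\mathcal{B}_1$, and exploit the identity $\mathbb{P}(\mathcal{A}_0\cup\mathcal{A}_1)=\mathbb{P}(\mathcal{B}_0\cup\mathcal{B}_1)$ (forced by $\mathcal{P}(\Gamma_s)=\mathcal{P}(\Gamma_\varepsilon^{\bullet})=\varepsilon$) to shift by a free constant before checking signs, exactly as in the Appendix. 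The only quibble is that the free constant should be $1-\alpha_\varepsilon$ rather than $\alpha_\varepsilon$ (e.g.\ on $\mathcal{B}_0$ one needs $(1-\eta^*)-(1-\alpha_\varepsilon)=-(\eta^*-\alpha_\varepsilon)$), a slip your own sign-checking step would immediately correct.
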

Several remarks can be made from Proposition~\ref{prop:prop2}.
First, for $\eps \in ]0,1]$, the $\eps$-confidence set is optimal in the sense that its risk is minimal over ${\rm{\bf{\Gamma_{\varepsilon}}}}$, the class
of all confidence sets that assign a label with probability $\eps$.  
Second, the excess risk of a confidence set is directly linked to the behavior of the function 
$f^*$ around $\alpha_\varepsilon$. This observation will play a major role in our main result related to rates of 
convergence in the next section.
Third, note that if we apply~\eqref{decompRisk} with $\varepsilon = 1$, which implies $\alpha_{\varepsilon} = 1/2$, we obtain the classical result
in classification
\begin{equation*}
R(s) - R(s^{*}) =  \mathbb{E}\left[|2 \eta^*(\Xp) - 1 | \mathbf{1}_{\{s^{*}(\Xp) \neq s(\Xp)\}}\right].
\end{equation*}
Let us conclude this section by stating a result that specifies the behavior of the risk
associated with the $\varepsilon$-confidence set w.r.t. the parameter $\varepsilon$:
\begin{proposition}
\label{prop:prop3}
The function $\varepsilon \mapsto \mathcal{R}(\Gamma_\varepsilon^{\bullet})$
is non decreasing on $]0,1]$.
\end{proposition}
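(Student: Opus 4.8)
The plan is to express $\mathcal{R}(\Gamma^\bullet_\varepsilon)$ explicitly as a function of $\varepsilon$ and then show it is non decreasing. Using the equivalent form~\eqref{eq:confoptEquivalent}, the confidence set classifies exactly on the event $\{f^*(\Xp) \geq \alpha_\varepsilon\}$, which under (A1) has probability $\varepsilon$, and on that event it makes an error precisely when $Y_\bullet \neq s^*(\Xp)$, i.e.\ with conditional probability $1 - f^*(\Xp)$. Hence
\begin{equation*}
\mathcal{R}(\Gamma^\bullet_\varepsilon) = \frac{1}{\varepsilon}\,\mathbb{E}\left[\bigl(1 - f^*(\Xp)\bigr)\mathbf{1}_{\{f^*(\Xp)\geq \alpha_\varepsilon\}}\right]
= \frac{1}{\varepsilon}\int_{\alpha_\varepsilon}^{1} (1-t)\,dF_f^*(t),
\end{equation*}
where I used that $f^*$ takes values in $[1/2,1]$ and $\alpha_\varepsilon = (F_f^*)^{-1}(1-\varepsilon)$.

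Next I would compare two levels $\varepsilon_1 < \varepsilon_2$. Since $\alpha_\varepsilon$ is non increasing in $\varepsilon$, we have $\alpha_{\varepsilon_1} \geq \alpha_{\varepsilon_2}$, so the integration region for $\varepsilon_2$ contains that for $\varepsilon_1$ and the additional mass $\varepsilon_2 - \varepsilon_1$ is carried on $[\alpha_{\varepsilon_2}, \alpha_{\varepsilon_1})$. Writing $a = \mathbb{E}[(1-f^*(\Xp))\mathbf{1}_{\{f^*(\Xp)\geq \alpha_{\varepsilon_1}\}}]$ and $b = \mathbb{E}[(1-f^*(\Xp))\mathbf{1}_{\{\alpha_{\varepsilon_2} \leq f^*(\Xp) < \alpha_{\varepsilon_1}\}}]$, we need $\frac{a}{\varepsilon_1} \leq \frac{a+b}{\varepsilon_2}$, i.e.\ $\varepsilon_2 a \leq \varepsilon_1 a + \varepsilon_1 b$, i.e.\ $(\varepsilon_2 - \varepsilon_1) a \leq \varepsilon_1 b$. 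The point is a comparison of averages of the integrand $1 - t$ over two disjoint regions: on the region defining $b$ we have $t < \alpha_{\varepsilon_1}$, hence $1 - t > 1 - \alpha_{\varepsilon_1}$, so $b \geq (1-\alpha_{\varepsilon_1})(\varepsilon_2 - \varepsilon_1)$; on the region defining $a$ we have $t \geq \alpha_{\varepsilon_1}$, hence $1 - t \leq 1 - \alpha_{\varepsilon_1}$, so $a \leq (1-\alpha_{\varepsilon_1})\varepsilon_1$. Combining, $(\varepsilon_2-\varepsilon_1) a \leq (\varepsilon_2-\varepsilon_1)(1-\alpha_{\varepsilon_1})\varepsilon_1 \leq \varepsilon_1 b$, which is exactly what was needed.

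Alternatively, and perhaps more cleanly, I would argue via differentiation: writing $G(\varepsilon) = \int_{\alpha_\varepsilon}^1 (1-t)\,dF_f^*(t)$, the derivative of $\mathcal{R}(\Gamma^\bullet_\varepsilon) = G(\varepsilon)/\varepsilon$ has the sign of $\varepsilon G'(\varepsilon) - G(\varepsilon)$; since each ``slice'' added to $G$ as $\varepsilon$ increases has integrand value $1 - \alpha_\varepsilon$, which is at least as large as the current average $G(\varepsilon)/\varepsilon$ of $1-t$ over $\{t \geq \alpha_\varepsilon\}$, the quantity $\varepsilon G'(\varepsilon) - G(\varepsilon)$ is non negative. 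To avoid smoothness technicalities (generalized inverses need not be differentiable), I would present the monotonicity argument of the previous paragraph, which only uses the Stieltjes representation and the monotonicity of $\varepsilon \mapsto \alpha_\varepsilon$.

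The main obstacle is handling the boundary/discreteness issues cleanly: making sure the identity $\mathcal{P}(\Gamma^\bullet_\varepsilon) = \varepsilon$ and the Stieltjes formula for $\mathcal{R}(\Gamma^\bullet_\varepsilon)$ are justified at the endpoints $\varepsilon \in \{$near $0$, $1\}$ and that the generalized inverse $\alpha_\varepsilon$ interacts correctly with the (continuous, by (A1)) law of $f^*(X)$ on the possibly-nontrivial set where $F_f^*$ is constant; but since (A1) guarantees $F_f^*(f^*(X))$ is uniform, the region-comparison inequality $a \leq (1-\alpha_{\varepsilon_1})\varepsilon_1$ and $b \geq (1-\alpha_{\varepsilon_1})(\varepsilon_2-\varepsilon_1)$ go through with the correct mass accounting, and the rest is routine.
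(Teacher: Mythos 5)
Your proof is correct and takes essentially the same route as the paper's: both reduce the claim to comparing the average of $1-f^*(\Xp)$ over the nested super-level sets $\{f^*\geq\alpha_{\eps_1}\}\subset\{f^*\geq\alpha_{\eps_2}\}$, bounding the integrand above by $1-\alpha_{\eps_1}$ on the inner set and below by the same quantity on the added slice. The only real difference is cosmetic: you invoke (A1) to identify the two masses with $\eps_1$ and $\eps_2-\eps_1$, whereas the paper works directly with the conditional probabilities (via an algebraic identity for a difference of ratios) and thus never uses (A1); your slice comparison would go through just as well with the raw probabilities $\mathbb{P}(f^*(\Xp)\geq\alpha_{\eps_1})$ and $\mathbb{P}(\alpha_{\eps_2}\leq f^*(\Xp)<\alpha_{\eps_1})$ in place of $\eps_1$ and $\eps_2-\eps_1$.
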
 
This result shows an expected fact: the larger the rejecting probability, the smaller the error. In particular
\begin{equation*}
\mathcal{R}(\Gamma_{\eps}^{\bullet}) \leq R(s^{*}) \;\;\, \forall \eps \in ]0,1].
\end{equation*}

\subsection{$\varepsilon$-confidence sets for Gaussian mixture}
\label{subsec:gaussianMixture}

In this section, we apply the $\varepsilon$-confidence set introduced in Definition~\ref{def:Epsilonconfset} 
to the particular case of Gaussian mixture model.
We set $\mathcal{X} = \mathbb{R}^d$ with $d\in \mathbb{N}\setminus \{0\}$.
Let us assume that the conditional distribution of $X$ given $Y$ is Gaussian and that, for simplicity,
the marginal distribution of $Y$ is Bernoulli with parameter $1/2$. To fix notation, we set
\begin{eqnarray*}
X | Y=0 \sim \mathcal{N}(\mu_0,\Sigma)
\quad \text{and} \quad
X | Y=1 \sim \mathcal{N}(\mu_1,\Sigma),
\end{eqnarray*}
where $\mu_0$ and  $\mu_1$ are vectors in $\mathbb{R}^d$ and $\Sigma$ is the commun covariance matrix. We assume that 
$\Sigma$ is invertible and denote by $\Vert \cdot \Vert_{ \Sigma^{-1}} $ the norm under $\Sigma^{-1}$: 
for any $\mu \in \mathbb{R}^d$ we have $\Vert \mu \Vert_{ \Sigma^{-1}}^2 = \mu^\top \Sigma^{-1} \mu$ where $\mu^\top$ stands for the transpose of $\mu$.
The following theorem establishes the classification error of the $\varepsilon$-confidence set 
$\Gamma_{\varepsilon}^{\bullet}$ in this framework.
\begin{proposition}
	\label{Th:RiskBayes}
		For all $\varepsilon \in ]0,1]$, we have
		$$
			\mathcal{R}(\Gamma_{\varepsilon}^{\bullet}) = \frac{    \mathbb{P}\left(    \Phi \left(Z  \right)     
			+    \Phi \left( Z  +  \Vert \mu_1  -    \mu_0 
			\Vert_{ \Sigma^{-1}}   \right) \leq  \varepsilon  \right) }{\varepsilon},
		$$
		where $Z$ is a standard normal random variable and $\Phi$ is the standard normal cumulative distribution function.
\end{proposition}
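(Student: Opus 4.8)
The plan is to push everything through the one-dimensional linear discriminant score and then invert the level constraint. Let $\ell(x) = (\mu_1-\mu_0)^\top\Sigma^{-1}x - \tfrac12\bigl(\mu_1^\top\Sigma^{-1}\mu_1 - \mu_0^\top\Sigma^{-1}\mu_0\bigr)$ denote the log-likelihood ratio. A direct Bayes computation in the Gaussian mixture gives $\eta^*(x) = (1+e^{-\ell(x)})^{-1}$, hence $s^*(x) = \mathbf{1}\{\ell(x)\ge 0\}$ and $f^*(x) = \sigma(|\ell(x)|)$ with $\sigma(t) = (1+e^{-t})^{-1}$ a strictly increasing homeomorphism of $[0,\infty)$ onto $[1/2,1)$. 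Assume first $\mu_0\neq\mu_1$ (if $\mu_0=\mu_1$ then $\eta^*\equiv 1/2$ and a one-line check gives $1/2$ on both sides of the claimed identity). Then $\ell(X)$ is a non-degenerate affine image of a Gaussian mixture, so given $Y=y$ it has variance $a^\top\Sigma a$ with $a=\Sigma^{-1}(\mu_1-\mu_0)\neq 0$, hence a density; consequently $f^*(X)$ has a continuous cumulative distribution function and Assumption~(A1) holds, which legitimizes the use of the reformulation~\eqref{eq:confoptEquivalent} and the exact-level identity~\eqref{eq:controlNonClass}.

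Next I would rewrite the risk. Because $\sigma$ is an increasing homeomorphism, $\{\card(\Gamma_\varepsilon^{\bullet}(X))=1\}=\{f^*(X)\ge\alpha_\varepsilon\}$ equals $\{|\ell(X)|\ge\beta_\varepsilon\}$, where $\beta_\varepsilon:=\sigma^{-1}(\alpha_\varepsilon)\ge 0$. By~\eqref{Sec2:RiskConfidenceSet} and~\eqref{eq:controlNonClass},
$$\mathcal{R}(\Gamma_\varepsilon^{\bullet}) = \varepsilon^{-1}\,\mathbb{P}\bigl(s^*(X)\neq Y,\ |\ell(X)|\ge\beta_\varepsilon\bigr).$$
Conditioning on $Y$ and using $s^*(X)=\mathbf{1}\{\ell(X)\ge 0\}$ together with $\beta_\varepsilon\ge 0$, the event $\{s^*(X)\neq Y,\ |\ell(X)|\ge\beta_\varepsilon\}$ reduces to $\{\ell(X)\ge\beta_\varepsilon\}$ on $\{Y=0\}$ and to $\{\ell(X)\le-\beta_\varepsilon\}$ on $\{Y=1\}$. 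The only computation left is the conditional law of $\ell(X)$: expanding the quadratic forms gives $\ell(X)\mid Y=0\sim\mathcal{N}(-\Delta^2/2,\Delta^2)$ and $\ell(X)\mid Y=1\sim\mathcal{N}(\Delta^2/2,\Delta^2)$ with $\Delta:=\|\mu_1-\mu_0\|_{\Sigma^{-1}}$. Both conditional probabilities then equal $\Phi(-\beta_\varepsilon/\Delta-\Delta/2)$, so that $\mathcal{R}(\Gamma_\varepsilon^{\bullet})=\varepsilon^{-1}\Phi(-\beta_\varepsilon/\Delta-\Delta/2)$.

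It remains to identify $\beta_\varepsilon$ via the level constraint. From~\eqref{eq:controlNonClass} we have $\varepsilon=\mathbb{P}(|\ell(X)|\ge\beta_\varepsilon)$; evaluating this with the unconditional two-component Gaussian mixture law of $\ell(X)$ yields
$$\varepsilon = \Phi\!\Bigl(-\tfrac{\beta_\varepsilon}{\Delta}-\tfrac{\Delta}{2}\Bigr) + \Phi\!\Bigl(-\tfrac{\beta_\varepsilon}{\Delta}+\tfrac{\Delta}{2}\Bigr).$$
Setting $u:=-\beta_\varepsilon/\Delta-\Delta/2$, this reads $\Phi(u)+\Phi(u+\Delta)=\varepsilon$. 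Since $z\mapsto\Phi(z)+\Phi(z+\Delta)$ is continuous and strictly increasing from $0$ to $2$, $u$ is its unique preimage of $\varepsilon$, whence $\{z:\Phi(z)+\Phi(z+\Delta)\le\varepsilon\}=(-\infty,u]$. Therefore $\mathbb{P}(\Phi(Z)+\Phi(Z+\Delta)\le\varepsilon)=\Phi(u)=\varepsilon\,\mathcal{R}(\Gamma_\varepsilon^{\bullet})$, which is the asserted formula.

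The main obstacle is more bookkeeping than genuine difficulty: the key observation is that the equation defining the threshold $\beta_\varepsilon$ is \emph{exactly} the equation whose root is the quantile showing up in the target expression, so that $\mathbb{P}(\Phi(Z)+\Phi(Z+\Delta)\le\varepsilon)$ is simply $\Phi(u)$. Everything else is routine Gaussian algebra for the moments of $\ell(X)\mid Y$, plus a sanity check at $\varepsilon=1$ (there $\alpha_1=1/2$, $\beta_1=0$, $u=-\Delta/2$, and the formula collapses to the classical Bayes risk $\Phi(-\Delta/2)$ of linear discriminant analysis), consistent with Proposition~\ref{prop:prop3}.
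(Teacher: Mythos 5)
Your argument is correct, and it reaches the formula by a genuinely different mechanism than the paper's proof. The paper never inverts the level constraint: it first computes the full cumulative distribution function $F_f^*$ explicitly and evaluates the random variable $F_f^*(f_{1-y}(X_\bullet))$ on $\{Y_\bullet=y\}$ (Proposition~\ref{prop:interm1}), so that the error-and-classified event becomes $\{\Phi(Z)+\Phi(Z-\Vert\mu_1-\mu_0\Vert_{\Sigma^{-1}})\geq 2-\varepsilon,\ Z\geq \Vert\mu_1-\mu_0\Vert_{\Sigma^{-1}}/2\}$ (Proposition~\ref{prop:bayesProba}); it then shows the second constraint is redundant and applies the reflection $Z\mapsto -Z$ together with $\Phi(x)+\Phi(-x)=1$ to land on the stated sub-level-set probability. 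You instead work from the threshold form~\eqref{eq:confoptEquivalent}: you reduce everything to the log-likelihood ratio $\ell(X)$, compute the numerator as the single value $\Phi(u)$ with $u=-\beta_\varepsilon/\Delta-\Delta/2$ tied to the unknown quantile $\beta_\varepsilon$, and then identify $\Phi(u)$ with $\mathbb{P}(\Phi(Z)+\Phi(Z+\Delta)\leq\varepsilon)$ by observing that $u$ is precisely the root of the strictly increasing map $z\mapsto\Phi(z)+\Phi(z+\Delta)$ at level $\varepsilon$, so the sub-level set is the half-line $(-\infty,u]$. The Gaussian algebra (conditional laws $\mathcal{N}(\pm\Delta^2/2,\Delta^2)$ of $\ell(X)$) is of course common to both, but your quantile-inversion step replaces the paper's event manipulation and symmetry flip, and arguably explains better \emph{why} the answer takes the form of a probability that $\Phi(Z)+\Phi(Z+\Delta)$ falls below $\varepsilon$: that quantity is exactly the level function whose $\varepsilon$-quantile defines the rejection threshold. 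Two further points in your favour: you verify Assumption~(A1) in the Gaussian model rather than assuming it (the paper uses $\mathcal{P}(\Gamma_\varepsilon^{\bullet})=\varepsilon$ implicitly), and you dispose of the degenerate case $\mu_0=\mu_1$, where~\eqref{eq:controlNonClass} fails but both sides of the identity still equal $1/2$ — a case the paper passes over silently. The one thing to keep explicit when writing this up is that measure-zero boundary issues (e.g. $\{\ell(X)=0\}$ when $\beta_\varepsilon=0$) are harmless because $\ell(X)$ has a density; you use this tacitly when replacing $\{\ell(X)<0\}$ by $\{\ell(X)\leq-\beta_\varepsilon\}$.
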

The proof of this proposition is postponed to the Appendix.
Interestingly, in the Gaussian mixture case, we get a close formula for the risk of the $\eps$-confidence set.
Moreover, this risk depends on $\Vert \mu_1  -    \mu_0  \Vert_{ \Sigma^{-1}} $ as in the binary classification framework which corresponds to the particular case $\varepsilon = 1$, where we do not use the reject option and where we get
\begin{equation*}
\mathcal{R}(\Gamma_{1}^{\bullet}) =  R(s^*) = 1 - \Phi\left(\frac{ \Vert \mu_1  - \mu_0 \Vert_{ \Sigma^{-1}  }  }{ 2 } \right).
\end{equation*}

\subsection{Relation with classifiers with reject option}
\label{subsec:classR}

The problem of classification with reject option has already been introduced in~\cite{Ch70}.
More recently the terminology of {\it classifiers with reject option} has been defined in~\cite{HW06}:
a classifier with reject option is a measurable function which 
maps $\mathcal{X}$ onto $\{0,1,R_e\}$ where the output $R_e$ means reject.
For a parameter $\alpha \in [1/2,1]$ and for $s_R$ a classifier with reject option, the risk function considered in~\cite{HW06} is
\begin{equation}
\label{eq:RiskDebuWeg}
L_{\alpha}(s_R) = \mathbb{P}\left(s_R(X) \neq Y \, , \,  s_R(X)  \neq R_e\right)    +   (1-\alpha) \, \mathbb{P}\left(s_R(X) = R_e\right) . 
\end{equation} 
This risk has been studied in the context of classification with reject option in the papers \cite{HW06,WY11} 
and references therein.
We notice that the above risk looks at rejecting as a part of the error in the same way as wrong classification.
The parameter $1 -\alpha$ controls the trade-off between these two "errors".
In other words, the parameter $1-\alpha$ is the cost of using the reject option.
This is a major difference with our point of view. Indeed, we recall that the probability
of rejection is a parameter in our setting, 
and then we do not include it in the risk~\eqref{Sec2:RiskConfidenceSet}. 
As a consequence, if we bound the risk~\eqref{Sec2:RiskConfidenceSet} while keeping under control
the probability of classifying~\eqref{Sec2:ProbaClass}, we are able to bound the risk~\eqref{eq:RiskDebuWeg}.
The reverse is not true. 
That is, controlling~\eqref{eq:RiskDebuWeg} does not provide any control on the probability of rejection
and one cannot avoid irrelevant use of the reject option is some situations.
This difference can be significant in some some practical situations where the knowledge of $\mathcal{P}\left(\Gamma_s\right)$ is a relevant information.
Indeed, when dealing with several examples to label, controlling this probability ensures the amount of the data we wish label.
Hence our methodology prevents from irrelevant use of the reject option.
For the same reason, a second important feature that differs between both  methodologies is that the comparison between two confidence sets is easier than the comparison
between two classifiers with reject option.
Indeed, for some $\varepsilon \in ]0,1]$, we say that, for two confidence sets $\Gamma_s$ and $\Gamma_{s^{'}}$ such that 
$\mathcal{P}\left(\Gamma_s\right)=\mathcal{P}\left(\Gamma_{s^{'}}\right) = \varepsilon$, the confidence set $\Gamma_s$ is ``better'' than $\Gamma_{s^{'}}$
if $\mathcal{R}\left(\Gamma_s\right) \leq \mathcal{R}\left(\Gamma_{s^{'}}\right)$.
As the study of the risk function $L_{\alpha}$  viewed in 
Equation~\eqref{eq:RiskDebuWeg} does not provide any control on the probability of classifying,
it is much more difficult to compare the performance of two classifiers with reject option on the set of labeled data.
This point will be made clear in Section~\ref{sec:simu} with the numerical experiment.

Let us consider the optimality by now: the paper \cite{HW06} also provides the optimal rule for the risk~\eqref{eq:RiskDebuWeg}.
For each $\alpha \in [1/2,1]$, the {\it Bayes rule with reject option} $s^*_{R_\alpha}$ is defined such that
\begin{equation*} 
L_{\alpha}(s^*_{R_\alpha}) = \min_{s_R}  L_{\alpha}(s_{R}), 
\end{equation*}
where the minimum is taken over all classifiers with reject option. It is 
characterized by 
\begin{equation}
\label{eq:WegClassBayes}
s^{*}_{R_{\alpha}}(\Xp) = 		\begin{cases}
			s^*(\Xp) \quad {\rm if} \; f^*(\Xp) \geq \alpha, \\
			R_e \qquad \;\;\ {\rm otherwise},
		\end{cases} 
\end{equation}
where $f^*(\cdot)= \max \{ \eta^*(\cdot),1-\eta^*(\cdot)\}$ as in our setting.
Obviously, the {\it Bayes rule with reject option} $s^*_{R_\alpha}$ can be written in term 
of confidence sets. This leads to an $\eps$-confidence set defined in the same way as in Equation~\eqref{eq:confoptEquivalent}.
However, there is an important difference: the main contribution of the present paper is to
provide a methodology to pick the parameter $\alpha_\eps$ in~\eqref{eq:confoptEquivalent}
such that the probability of classifying an example~\eqref{Sec2:ProbaClass} is exactly $\eps$.
The key to be able to do so is the use of the cumulative distribution function of $f^*(X)$.
In Section~\ref{subsec:plugInDef}, we will see that the data-driven counterpart of the $\eps$-confidence
defined in Definition~\ref{def:Epsilonconfset} also controls the probability of classifying.
Notably, this is possible in a semi-supervised way, that is, only using a set of
unlabeled data.

\section{Plug-in $\varepsilon$-confidence sets}
\label{sec:Estimateion}
%
This section is devoted to the study the data driven counterpart of the $\eps$-confidence sets provided by plug-in rule.
We provide the construction 
of the plug-in methods in
Section~\ref{subsec:plugInDef}. Their asymptotic consistency as well as rates of convergence are given in Section~\ref{subsec:plugInAs}. 
%
\subsection{Definition of the plug-in $\eps$-confidence sets}
\label{subsec:plugInDef}
%
For $\eps \in ]0,1]$, the construction of our plug-in $\eps$-confidence set relies on a previous 
estimator of the regression function $\eta^*$.
To this end, we introduce a first dataset, $\mathcal{D}_n$, which consists of $n$ independent copies of $(X,Y)$. 
The dataset $\mathcal{D}_n $ is used to estimate the function $\eta$ and therefore the functions $f^*$ and $s^{*}$ as 
well. 
Let us denote by $\hat{\eta}$, $\hat{f}(\cdot) = \max(\hat{\eta}(\cdot), 1-\hat{\eta}(\cdot))$ and $\hat{s} = 
\1_{\{\hat{\eta}(\cdot) \geq 1/2\}}$ the 
estimators of $\eta^*$, $f^*$ and $s^{*}$ respectively.
Thanks to these estimations,  a data driven approximation of the $\varepsilon$-confidence set given in 
Definition~\ref{def:Epsilonconfset} can be
\begin{equation}
\label{def:pseudoestim}
	\widetilde{\Gamma}_{\varepsilon}^{\bullet}(\Xp) =
	\begin{cases}
		\{\hat{s}(\Xp)\} \;\; {\rm if} \; F_{\hat{f}}\left({\hat{f}}(\Xp)\right) \geq 1-\varepsilon \\
		\{0,1\} \; \; \quad {\rm otherwise},
	\end{cases} 
\end{equation}
where $F_{\hat{f}}$ is the cumulative distribution function of $\hat{f}(X)$.
Hence, $\widetilde\Gamma_{\varepsilon}^{\bullet}(\Xp)$ invokes the cumulative distribution function 
$F_{\hat{f}}$,  which is unknown and therefore needs to be estimated.
We then consider a second dataset, independent of $\mathcal{D}_n$, denoted by 
$\mathcal{D}_N = \{ X_i, i = 1, \ldots,N \}$ where $X_1, 
\ldots, X_N$ are independent copies of $X$.
Based on $\mathcal{D}_N$, we estimate the cumulative function $F_{\hat{f}}$ by the empirical cumulative distribution function of $\hat{f}(X)$
denoted by $\hat{F}_{\hat{f}}$. 
Now, we can define the plug-in $\varepsilon$-confidence set:
\begin{definition}\label{def:Epsilonconfset2}
	Let  $\varepsilon \in ]0,1]$ and $\hat{\eta}$ be any estimator of $\eta^*$, the plug-in $\varepsilon$-confidence set is defined as follows:
	\begin{equation*}
		\widehat{\Gamma}_{\varepsilon}^{\bullet}(\Xp) =
		\begin{cases}
			\{ \hat{s}(\Xp) \} \;\; {\rm if} \; \hat{F}_{\hat{f}}(\hat{f}(\Xp)) \geq 1-\varepsilon \\
			\{0,1\} \;\;  \quad {\rm otherwise},
		\end{cases} 
	\end{equation*}
where $\hat{f}(\cdot)= \max \{ \hat{\eta}(\cdot),1-\hat{\eta}(\cdot)\}$ and $\hat{F}_{\hat{f}}(\hat{f}
(\Xp))= \frac{1}{N} \sum_{i=1}^N \mathbf{1}_{\left\{   \hat{f}(X_i) \leq 
\hat{f}(\Xp)\right\}}$.
\end{definition}
\begin{remark} 
The samples $\mathcal{D}_n$ and $\mathcal{D}_N$ play completely different roles. The sample $\mathcal{D}_n$ is used to estimate $\eta^*$ and then must consist of labeled observations. The second dataset $\mathcal{D}_N$, used to estimate the cumulative function $F_{\hat{f}}$, requires only a set 
of {unlabeled} observations. Hence, the construction of the plug-in $\eps$-confidence sets does not require more labeled examples than in the classical classification setting.   
This is particularly interesting in some practical situations where the number of labeled examples is small while a large number 
of unlabeled observations is available.
\end{remark}

%
\subsection{Theoretical performance}
\label{subsec:plugInAs}
%

This section is devoted to  assessing the asymptotic performances of the {plug-in $\varepsilon$-confidence set}. The symbols $\mathbf{P}$ and $\mathbf{E}$
stand for generic probability and expectation, respectively.
Let $\eps \in ]0,1]$, and $\widehat{\Gamma}_{\varepsilon}^{\bullet}$ be a plug-in $\eps$-confidence set.
 We define the risk of $\widehat{\Gamma}_{\varepsilon}^{\bullet}$ by the natural quantity 
\begin{equation}
\label{riskPluggEstim}
	\mathbf{R} \left(\widehat{\Gamma}_{\varepsilon}^{\bullet}\right)= \mathbf{P}\left(\hat{s}(\Xp) \neq \Yp | \hat{F}_{\hat{f}}(\hat{f}(\Xp))  \geq 1- 
	\varepsilon \right).
\end{equation}
Note that we use here the notation $\mathbf{R}$ rather than the previous one $\mathcal{R}$ 
given by~\eqref{Sec2:RiskConfidenceSet} to stress that the probability $\mathbf{P}$ is taken under the law of 
$(\mathcal{D}_n,\mathcal{D}_N,(\Xp,\Yp))$ instead of just $(\Xp,\Yp)$.
Throughout this section we assume the following condition on the cumulative distribution function $F_{\hat{f}}$, which is analogous to Assumption (A1).
 However, this assumption relies on the estimator $\hat{f}$ and then is not restrictive since it can be chosen by the statistician.
{\it 
\begin{itemize}
\item[{\bf (A2)}] The cumulative distribution function $F_{\hat{f}}$ of $\hat{f}(X)$ is continuous.
\end{itemize}
}
We also define the risk of the oracle counterpart $\widetilde{\Gamma}_{\varepsilon}^{\bullet}$ of $\widehat{\Gamma}_{\varepsilon}^{\bullet}$
\begin{equation*}
\mathbf{R} \left(\widetilde{\Gamma}_{\varepsilon}^{\bullet}\right)= \mathbf{P}\left(\hat{s}(\Xp) \neq \Yp |{F}_{\hat{f}}(\hat{f}(\Xp))  \geq 1- \varepsilon \right).
\end{equation*}
The objective of this section is to prove both that
\begin{eqnarray}
\label{eq:convProba}
\mathbf{P} \left(\hat{F}_{\hat{f}}(\hat{f}(\Xp))  \geq 1- 
	\varepsilon \right)
   \rightarrow  & \eps , &   {\rm and} \\
  \mathbf{R} \left(\widehat{\Gamma}_{\varepsilon}^{\bullet}\right)
   \rightarrow  & \mathcal{R}\left(\Gamma_{\varepsilon}^{\bullet}\right), &  n,N \to  + \infty, \nonumber
\end{eqnarray}
and to derive rates for these convergences. Since $\mathcal{D}_n$ is dedicated to the estimation of $\eta^*$ and $\mathcal{D}_N$ to the estimation of 
$F_{\hat{f}}$, we prove that
\begin{eqnarray}
\mathbf{R} \left(\widehat{\Gamma}_{\varepsilon}^{\bullet}\right) - \mathbf{R} \left(\widetilde{\Gamma}_{\varepsilon}^{\bullet}\right) & \rightarrow & 0 , \;\; {\rm and} 
\label{conv2}\\
\mathbf{R} \left(\widetilde{\Gamma}_{\varepsilon}^{\bullet}\right) - \mathcal{R}\left(\Gamma_{\varepsilon}^{\bullet}\right) & \rightarrow &  0, \label{conv1} 
\end{eqnarray}
when both $n$ and $N$ go to infinity.
The convergences~\eqref{eq:convProba} and \eqref{conv2} relies on the Dvoretzky--Kiefer--Wolfowitz inequality~\cite{Mass90} while the convergence~\eqref{conv1} relies on the following inequality.
%
%
\begin{proposition}
\label{propo:presqconfset}
For all $\varepsilon \in ]0,1]$, the following inequality holds under Assumptions (A1) and (A2) 
\begin{multline*}
0 \leq \mathbf{R} \left(\widetilde{\Gamma}_{\varepsilon}^{\bullet}\right) - 
\mathcal{R}\left(\Gamma_{\varepsilon}^{\bullet}\right) \leq 
\dfrac{1}{\varepsilon} \{\mathbf{E}\left[|\eta^*(\Xp) - \alpha_{\varepsilon}| \mathbf{1}_{\{|\hat{\eta}(\Xp) - 
\eta^*(\Xp)| \geq |\eta^*(\Xp) - \alpha_{\varepsilon}|\}}\right] \\
+ \mathbf{E}\left[|1 - \eta^*(\Xp) - \alpha_\varepsilon| \mathbf{1}_{\{|\hat{\eta}(\Xp) - \eta^*(\Xp)| \geq |1 - \eta^*(\Xp) 
- \alpha_{\varepsilon}|\}} \right] \\
+ \alpha_{\varepsilon} |F_{\hat{f}}(\alpha_{\varepsilon}) - F_{f}^*(\alpha_{\varepsilon})|\},
\end{multline*}
where $\alpha_{\varepsilon} = (F_{{f}}^*)^{-1}(1-\varepsilon)$.
\end{proposition}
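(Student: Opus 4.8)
The plan is to work conditionally on the labelled sample $\mathcal{D}_n$, so that $\hat\eta$, $\hat f$ and $\hat s$ are deterministic, and to integrate over $\mathcal{D}_n$ only at the end. Two preliminary facts are needed. First, under Assumption (A2) the probability integral transform gives that, conditionally on $\mathcal{D}_n$, the variable $F_{\hat f}(\hat f(\Xp))$ is uniform on $[0,1]$, exactly as in~\eqref{eq:controlNonClass}; hence $\mathbf{P}(F_{\hat f}(\hat f(\Xp))\geq 1-\varepsilon\mid\mathcal{D}_n)=\varepsilon$, so the risks $\mathbf{R}(\widetilde{\Gamma}_\varepsilon^\bullet)$ and $\mathcal{R}(\Gamma_\varepsilon^\bullet)$ carry the same normalisation $\varepsilon$ and $\mathbf{R}(\widetilde{\Gamma}_\varepsilon^\bullet)=\mathbf{E}[\mathcal{R}(\widetilde{\Gamma}_\varepsilon^\bullet\mid\mathcal{D}_n)]$, where $\mathcal{R}(\cdot\mid\mathcal{D}_n)$ denotes the risk~\eqref{Sec2:RiskConfidenceSet} for the fixed classifier $\hat s$. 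Second, setting $\hat\alpha:=(F_{\hat f})^{-1}(1-\varepsilon)$, continuity of $F_{\hat f}$ gives $\{F_{\hat f}(\hat f(\Xp))\geq 1-\varepsilon\}=\{\hat f(\Xp)\geq\hat\alpha\}$ and $F_{\hat f}(\hat\alpha)=1-\varepsilon$, while Assumption (A1) gives $F_f^*(\alpha_\varepsilon)=1-\varepsilon$ with $\alpha_\varepsilon=(F_f^*)^{-1}(1-\varepsilon)$. The lower bound is then immediate: conditionally on $\mathcal{D}_n$, $\widetilde{\Gamma}_\varepsilon^\bullet$ is a confidence set associated with $\hat s$ with classification probability $\varepsilon$, hence lies in the class $\mathbf{\Gamma}_\varepsilon$, so Proposition~\ref{prop:prop2}(1) yields $\mathcal{R}(\widetilde{\Gamma}_\varepsilon^\bullet\mid\mathcal{D}_n)\geq\mathcal{R}(\Gamma_\varepsilon^\bullet)$, and integrating over $\mathcal{D}_n$ gives $\mathbf{R}(\widetilde{\Gamma}_\varepsilon^\bullet)\geq\mathcal{R}(\Gamma_\varepsilon^\bullet)$.

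For the upper bound I would apply, conditionally on $\mathcal{D}_n$, the exact expansion~\eqref{decompRisk} of Proposition~\ref{prop:prop2}(2) to $\Gamma_s=\widetilde{\Gamma}_\varepsilon^\bullet$ with $s=\hat s$; since $\{\card(\widetilde{\Gamma}_\varepsilon^\bullet(\Xp))=1\}=\{\hat f(\Xp)\geq\hat\alpha\}$, the sets $\mathcal{A}_y,\mathcal{B}_y,\mathcal{C}$ appearing there become fully explicit in $\hat f,\hat\alpha,\alpha_\varepsilon,\eta^*,\hat\eta$. On $\mathcal{C}$ one has $f^*(\Xp)\geq\alpha_\varepsilon$, hence the identity $|2\eta^*(\Xp)-1|=|\eta^*(\Xp)-\alpha_\varepsilon|+|1-\eta^*(\Xp)-\alpha_\varepsilon|$ holds there, so the right-hand side of~\eqref{decompRisk} rearranges into $\frac{1}{\varepsilon}\big(\mathbf{E}[|\eta^*(\Xp)-\alpha_\varepsilon|(\mathbf{1}_{\mathcal{C}}+\mathbf{1}_{\mathcal{A}_0\cup\mathcal{B}_0})]+\mathbf{E}[|1-\eta^*(\Xp)-\alpha_\varepsilon|(\mathbf{1}_{\mathcal{C}}+\mathbf{1}_{\mathcal{A}_1\cup\mathcal{B}_1})]\big)$. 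The key step is then the pair of set inclusions, valid up to a remainder inside $M:=\{\hat f(\Xp)\text{ lies between }\hat\alpha\text{ and }\alpha_\varepsilon\}$,
\begin{align*}
(\mathcal{A}_0\cup\mathcal{B}_0\cup\mathcal{C})\setminus\{|\hat\eta(\Xp)-\eta^*(\Xp)|\geq|\eta^*(\Xp)-\alpha_\varepsilon|\}&\subseteq M\cap\{\hat\eta(\Xp)\geq 1/2\},\\
(\mathcal{A}_1\cup\mathcal{B}_1\cup\mathcal{C})\setminus\{|\hat\eta(\Xp)-\eta^*(\Xp)|\geq|1-\eta^*(\Xp)-\alpha_\varepsilon|\}&\subseteq M\cap\{\hat\eta(\Xp)< 1/2\},
\end{align*}
which are checked by a routine case analysis on the position of $\eta^*(\Xp)$ relative to $\alpha_\varepsilon$ and $1-\alpha_\varepsilon$ and of $\hat\eta(\Xp)$ relative to $\hat\alpha$ and $1-\hat\alpha$, using $\hat f=\max(\hat\eta,1-\hat\eta)$, the fact that $s^*\neq\hat s$ forces $\hat\eta(\Xp)$ and $\eta^*(\Xp)$ to lie on opposite sides of $1/2$, and $\alpha_\varepsilon,\hat\alpha\geq 1/2$.

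To conclude, note that $\mathcal{A}_y,\mathcal{B}_y,\mathcal{C}$ are pairwise disjoint, that $|\eta^*(\Xp)-\alpha_\varepsilon|\leq\alpha_\varepsilon$ and $|1-\eta^*(\Xp)-\alpha_\varepsilon|\leq\alpha_\varepsilon$ hold trivially for $\eta^*(\Xp)\in[0,1]$ and $\alpha_\varepsilon\in[1/2,1]$, and that the two remainders above are disjoint subsets of $M$. Hence the combined contribution of the remainders is at most $\alpha_\varepsilon\,\mathbf{P}(M\mid\mathcal{D}_n)$, and by the first paragraph $\mathbf{P}(M\mid\mathcal{D}_n)=|F_{\hat f}(\hat\alpha)-F_{\hat f}(\alpha_\varepsilon)|=|F_f^*(\alpha_\varepsilon)-F_{\hat f}(\alpha_\varepsilon)|$. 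Substituting these bounds into~\eqref{decompRisk} and integrating over $\mathcal{D}_n$ yields the announced inequality.

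The main obstacle is precisely the case analysis behind the two inclusions. A naive termwise bound on $\mathcal{A}_y,\mathcal{B}_y,\mathcal{C}$ overshoots near the reject boundary — it retains a $|2\eta^*-1|$-type factor that is not controlled by either distance $|\eta^*-\alpha_\varepsilon|$ or $|1-\eta^*-\alpha_\varepsilon|$ — so one must keep careful track of which side of $1/2$ the estimate $\hat\eta(\Xp)$ lies on, and observe that all the ``threshold-mismatch'' remainders sit inside the single set $M$ and split according to the sign of $\hat\eta(\Xp)-1/2$. This is what guarantees they cost only the one factor $\alpha_\varepsilon|F_{\hat f}(\alpha_\varepsilon)-F_f^*(\alpha_\varepsilon)|$ rather than several.
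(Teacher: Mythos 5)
Your proof is correct and follows essentially the same route as the paper: apply the decomposition of Proposition~\ref{prop:prop2} to $\widetilde{\Gamma}_{\varepsilon}^{\bullet}$ (using that (A2) forces its classification probability to equal $\varepsilon$ conditionally on $\mathcal{D}_n$), bound $|2\eta^*-1|$ by $|\eta^*-\alpha_\varepsilon|+|1-\eta^*-\alpha_\varepsilon|$ on $\mathcal{C}$, and show by case analysis that the only uncontrolled contributions live in the threshold-mismatch set $\{\hat f(\Xp)\text{ between }\hat\alpha_\varepsilon\text{ and }\alpha_\varepsilon\}$, whose probability is $|F_{\hat f}(\alpha_\varepsilon)-F_f^*(\alpha_\varepsilon)|$. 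The only differences are organisational (you group the remainders by the sign of $\hat\eta(\Xp)-1/2$ where the paper tracks them event by event, and you are more explicit about the conditioning on $\mathcal{D}_n$), neither of which changes the argument.
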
 
%
%
The proof of Proposition~\ref{propo:presqconfset} relies on Proposition~\ref{prop:prop2}.
For $\eps \in ]0,1]$, Proposition~\ref{propo:presqconfset} evaluates the loss of performance using the confidence set $\widetilde{\Gamma}_{\varepsilon}^{\bullet}$ instead 
of the $\varepsilon$-confidence set $\Gamma_{\varepsilon}^{\bullet}$. We can distinguish two parts in the upper bound. One part is linked to the classification with reject 
option setting provided by~\cite{HW06} and then depends on the behavior of the 
function $f$ around $\alpha_\varepsilon$. Note that the same quantity is obtained by~\cite{HW06}.
The second part $\alpha_{\varepsilon} |F_{\hat{f}}(\alpha_{\varepsilon}) - F_{f}^*(\alpha_{\varepsilon})|$ is related to our proposed confidence set and is due to the 
approximation of $F_{f}^*$ by $F_{\hat{f}}$.

Observe that when $\varepsilon = 1$ one can recover a classical inequality in the classification setting. Indeed, in this case, $\alpha_{\varepsilon} = 1/2$
and $F_{\hat{f}}(1/2) =  F_{f}^*(1/2)$. Hence, we obtain
\begin{equation*}
\mathbf{R} \left(\widetilde{\Gamma}_{1}^{\bullet}\right) - \mathcal{R}\left(\Gamma_{1}^{\bullet}\right) \leq
\mathbf{E}\left[|2\eta^*(\Xp) - 1|\mathbf{1}_{\{|\hat{\eta}(\Xp) - \eta^*(\Xp)| \geq |\eta^*(\Xp)-1/2|\}}\right]. 
\end{equation*}

Finally, we state our main result which describes the asymptotic behavior of our plug-in $\eps$-confidence sets:

\begin{theorem}
	\label{thm:mainThm}
\begin{enumerate}
\item If $\hat{\eta}(\Xp) \rightarrow \eta^*(\Xp)$ in probability when $n \rightarrow +\infty$, then for any $\varepsilon \in ]0,1]$
	\begin{equation*}
		\mathbf{P} \left(\hat{F}_{\hat{f}}(\hat{f}(\Xp))  \geq 1- 
		\varepsilon \right)
		\rightarrow   \eps,
	\end{equation*}
and		
	\begin{equation*}
		\mathbf{R} \left(\widehat{\Gamma}_{\varepsilon}^{\bullet}\right)
		 -  \mathcal{R}\left(\Gamma_{\varepsilon}^{\bullet}\right)  \rightarrow  0,	
	\end{equation*}
when both $n$ and $N$ go to infinity.
\item For any $\varepsilon \in ]0,1]$, assume that there exist $C_1 < \infty$ and $\gamma_\varepsilon > 0$ such that
	\begin{equation}
	\label{MargeAlpha}
	\mathbf{P} \left(  \vert f(X) - \alpha_\varepsilon   \vert \leq t  \right) \leq  C_1 t^{\gamma_\varepsilon}, \qquad \forall t>0.
	\end{equation}
Assume also that there exist a sequence of positive numbers $a_n \rightarrow +\infty$ and some positive constants $C_2, C_3$ such that
	\begin{equation}
		\label{devTsyb}
 	\mathbf{P}\left(|\hat{\eta}(x) - \eta(x)| \geq t \right) \leq C_2 \exp\left(- C_3 a_n t^{2} \right), \qquad 
 	\forall t > 0, \ \forall x\in \mathcal{X}.
	\end{equation}
Then we have 
	\begin{equation}
	\label{eq:eqRateProba}
	\mathbf{P} \left(\hat{F}_{\hat{f}}(\hat{f}(\Xp))  \geq 1- 
		\varepsilon \right)
		=    \eps + O(N^{-1/2}),
	\end{equation}
and
	\begin{equation}
	\label{eq:eqRate}
	\mathbf{R} \left(\widehat{\Gamma}_{\varepsilon}^{\bullet}\right)
		 -  \mathcal{R}\left(\Gamma_{\varepsilon}^{\bullet}\right) = O(a_n^{-\gamma_\varepsilon/2}) + O(N^{-1/2}).
	\end{equation}
\end{enumerate}
\end{theorem}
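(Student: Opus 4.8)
The plan is to treat the two displayed convergences by splitting each into the two pieces announced in~\eqref{conv2} and~\eqref{conv1}, controlling the $\mathcal{D}_N$-part (estimation of $F_{\hat f}$) via the Dvoretzky--Kiefer--Wolfowitz (DKW) inequality conditionally on $\mathcal{D}_n$, and the $\mathcal{D}_n$-part (estimation of $\eta^*$) via Proposition~\ref{propo:presqconfset}. Concretely, for the probability statement I would write, conditionally on $\mathcal{D}_n$,
\begin{equation*}
\mathbf{P}\left(\hat F_{\hat f}(\hat f(\Xp)) \geq 1-\varepsilon \,\middle|\, \mathcal{D}_n\right)
= \mathbf{P}\left(F_{\hat f}(\hat f(\Xp)) \geq 1-\varepsilon \,\middle|\, \mathcal{D}_n\right) + r_N,
\end{equation*}
where the remainder $r_N$ is bounded in absolute value by $2\sup_t|\hat F_{\hat f}(t)-F_{\hat f}(t)|$ plus a term controlling the discrepancy on the boundary set $\{F_{\hat f}(\hat f(\Xp)) \in [1-\varepsilon-\delta_N, 1-\varepsilon+\delta_N]\}$; under Assumption~(A2), $F_{\hat f}(\hat f(X))$ is uniform on $[0,1]$ conditionally on $\mathcal{D}_n$, so the conditional probability equals exactly $\varepsilon$ and the boundary term is itself $O(\delta_N)$. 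Taking $\delta_N$ of order $N^{-1/2}$ and using DKW ($\mathbf{P}(\sup_t|\hat F_{\hat f}-F_{\hat f}|>\lambda\mid\mathcal{D}_n)\le 2e^{-2N\lambda^2}$) gives $\mathbf{E}|r_N| = O(N^{-1/2})$, hence~\eqref{eq:eqRateProba}; for part~1 one only needs $r_N\to 0$ in probability, which requires no rate on $\hat\eta$ at all since~(A2) makes the conditional probability identically $\varepsilon$.

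For the risk convergence I would decompose
\begin{equation*}
\mathbf{R}\left(\widehat\Gamma_\varepsilon^\bullet\right) - \mathcal{R}\left(\Gamma_\varepsilon^\bullet\right)
= \left[\mathbf{R}\left(\widehat\Gamma_\varepsilon^\bullet\right) - \mathbf{R}\left(\widetilde\Gamma_\varepsilon^\bullet\right)\right]
+ \left[\mathbf{R}\left(\widetilde\Gamma_\varepsilon^\bullet\right) - \mathcal{R}\left(\Gamma_\varepsilon^\bullet\right)\right],
\end{equation*}
i.e.\ the sum of~\eqref{conv2} and~\eqref{conv1}. The second bracket is handled directly by Proposition~\ref{propo:presqconfset}: one bounds the two ``Tsybakov-type'' expectations using the margin condition~\eqref{MargeAlpha} together with the concentration bound~\eqref{devTsyb}, in the now-standard way --- split on $\{|\eta^*(\Xp)-\alpha_\varepsilon|\le t\}$ versus its complement, optimize over $t$, and get a bound of order $a_n^{-\gamma_\varepsilon/2}$ (and similarly for the term with $1-\eta^*$, which involves the same margin exponent since $f^*=\max\{\eta^*,1-\eta^*\}$); the remaining term $\alpha_\varepsilon|F_{\hat f}(\alpha_\varepsilon)-F_f^*(\alpha_\varepsilon)|$ is bounded by $\mathbf{P}(|\hat\eta(X)-\eta^*(X)| \ge |f^*(X)-\alpha_\varepsilon|)$-type quantities, again controlled by~\eqref{MargeAlpha}--\eqref{devTsyb} at rate $a_n^{-\gamma_\varepsilon/2}$. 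For part~1, consistency of $\hat\eta$ plus dominated convergence makes each of these expectations vanish without rates. The first bracket --- the gap between using $\hat F_{\hat f}$ and $F_{\hat f}$ --- is, working conditionally on $\mathcal{D}_n$, controlled by writing the difference of the two conditional risks as a ratio $\mathbf{P}(\hat s\ne\Yp,\,\cdot)/\mathbf{P}(\cdot)$, bounding numerator and denominator discrepancies by $\sup_t|\hat F_{\hat f}-F_{\hat f}|$ via DKW (after again absorbing a boundary layer of width $O(N^{-1/2})$ using continuity of $F_{\hat f}$), and dividing by the denominator which is $\varepsilon + O(N^{-1/2})$ and hence bounded away from zero; this yields $O(N^{-1/2})$ in expectation, giving~\eqref{eq:eqRate}.

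The main obstacle I anticipate is the boundary-layer argument in the first bracket: replacing $F_{\hat f}$ by its empirical version $\hat F_{\hat f}$ changes the conditioning event $\{\hat F_{\hat f}(\hat f(\Xp))\ge 1-\varepsilon\}$, and near the threshold $1-\varepsilon$ the indicator is discontinuous, so one cannot simply bound the difference of risks by the sup-norm distance of the CDFs times a constant --- one must also show that the mass of $\hat f(\Xp)$ placed in the $O(\sup_t|\hat F_{\hat f}-F_{\hat f}|)$-neighborhood of the threshold is itself small, which is where Assumption~(A2) (continuity of $F_{\hat f}$, giving uniformity of $F_{\hat f}(\hat f(X))$) is essential and must be invoked carefully, uniformly over the realization of $\mathcal{D}_n$. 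A secondary technical point is that~\eqref{devTsyb} is stated pointwise in $x$, so interchanging it with the expectation over $\Xp$ (and over $\mathcal{D}_n$) in Proposition~\ref{propo:presqconfset}'s bound needs Fubini and the observation that the bound in~\eqref{devTsyb} is uniform in $x$; this is routine but should be stated.
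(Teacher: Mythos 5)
Your proposal follows essentially the same route as the paper: the same decomposition into the $\mathcal{D}_N$-part (DKW applied conditionally on $\mathcal{D}_n$, combined with the uniformity of $F_{\hat f}(\hat f(X))$ under Assumption (A2)) and the $\mathcal{D}_n$-part (Proposition~\ref{propo:presqconfset} plus the margin/concentration split optimized at $\delta\sim a_n^{-1/2}$). The boundary-layer difficulty you flag near the threshold $1-\varepsilon$ is exactly what the paper's tool Lemma~\ref{lem:lemUtile} packages (margin exponent $1$ from uniformity, peeling against the exponential concentration), so your plan is correct and matches the paper's proof.
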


The proof of this result is postponed to the Appendix. Theorem~\ref{thm:mainThm} states that if the estimator of 
$\eta^*$ is consistent, then asymptotically the plug-in 
$\varepsilon$-confidence set is level-$\eps$-confidence set and performs as well as the $\varepsilon$-confidence set.
Moreover, several observations can be made according to the second point of the theorem.
First of all, we mention the rate of convergence~\eqref{eq:eqRateProba} does not require any of the assumptions~\eqref{MargeAlpha} and~\eqref{devTsyb}. It only needs the consistency of the estimator of $\eta^*$.
Second, we mention that the assumption~\eqref{MargeAlpha} has already been introduced in the classification with reject 
option setting in~\cite{HW06}. 
It is analogous to Tsybakov's margin condition in \cite{TsybMarre,AT07} introduced in the classification 
framework. 
We point out here the fact that if $\eta^*(\Xp)$ has a density w.r.t. the Lebesgue measure, the 
assumption~\eqref{MargeAlpha} is satisfied with $\gamma_\varepsilon \geq 1$ for any $\varepsilon \in ]0,1]$.
Third, if $\gamma_\varepsilon \gg 1$, we can get faster rate of convergence.
However, this rate cannot be better than $O(N^{-1/2})$ which is the term due to the estimation of the cumulative 
distribution function $F_{\hat{f}}$.
This term is however not limiting. Indeed, recalling that the sample size $N$ refers to the dataset $\mathcal{D}_N$ 
which can consist only of 
unlabeled observations, getting large $N$ is not a big issue.
Hence, we can consider the first term $ O(a_n^{-\gamma_\varepsilon/2}) $ as the leading term 
in~\eqref{eq:eqRate}.
This term relies on Proposition~\ref{propo:presqconfset} and on the assumption~\eqref{devTsyb} which 
is crucial to establish our rate of convergence.
Note that various estimators satisfy this condition such as kernel estimators (see~\cite{AT07}, for
more details).

\section{Numerical results}
\label{sec:simu}

In this section, we evaluate the plug-in $\varepsilon$-confidence sets numerically.
Moreover, we indicate the importance of Assumptions (A1) and (A2).

\subsection{Under Assumptions (A1)-(A2)}
\label{subsec:a1a2}
In this section both of the cumulative distribution functions $F_f$ and $F_{\hat{f}}$ are continuous.
We generate $(X,Y)$ according to the following models.
\begin{itemize}
\item Model~1:
\begin{enumerate}
\item  the feature $X \overset{\mathcal{L}}{=} (U_1, \ldots, U_{10})$, where $U_i$ are i.i.d from a 
uniform distribution on $[0,1]$;
\item conditional on $X$,  the label $Y$ is drawn according to a Bernoulli distribution with parameter 
$\eta^*(X)$ 
defined by
$\text{logit}(\eta^*(X)) = X^{1} - X^{2} - X^{3} + X^{9}$, where $X^{j}$ is the $j^{\text{th}}$
component of $X$.
\end{enumerate}
\item Model~2:
\begin{enumerate}
\item the feature $X \overset{\mathcal{L}}{=} (\mathcal{N}_1, \mathcal{N}_{2}, \mathcal{N}_{3})$, where $\mathcal{N}_i$ are i.i.d from
standard Gaussian distribution;
\item conditional on $X$, the label $Y$ is drawn according to a Bernoulli distribution with parameter
$\eta^*(X)$ defined by
$\text{logit}(\eta^*(X)) = (X^{1})^{2} + \frac{X^{2}}{2} + \sin(X^{1} + X^{3}) + 3X^{3} $. 
\end{enumerate}
\end{itemize}

The first model leads to a classification problem which is quite difficult.
Indeed, using a large dataset of features, we evaluate the distribution function of $\eta^*(X)$, and then  obtain 
that $\mathbb{P}(\eta^*(X) \in [0.4,0.6]) \simeq 0.5$.
On the contrary, estimating $\eta^*$ is easy since $\text{logit}(\eta^*(X))$ is a linear function of $X$.
Model~2 provides a more simple classification problem: the estimation of the distribution function of $\eta^*(X)$ leads 
to $\mathbb{P}(\eta^*(X) \in [0.4,0.6]) \simeq 0.15$. On the other side, the estimation of $\eta^*$ is a little more 
tricky. 

In order to illustrate our convergence result, we first provide estimation of the risk $\mathcal{R}$
for the $\varepsilon$-confidence sets.
More precisely, for each model and each $\varepsilon \in \{\frac{k}{10}, \;\; k \in \{1, \ldots, 10\}\}$,
we repeat $B = 100$ times the following steps:
\begin{enumerate}
\item[$i)$] simulate two data sets $\mathcal{D}_{N}$ and $\mathcal{D}_{K}$ according to the considered model with $N 
= 1000$ 
and $K = 1000$;
\item[$ii)$] based on $\mathcal{D}_{N}$, we compute the empirical cumulative distribution of $f^*(X)$ (this step 
requires only the features);
\item[$iii)$] finally, we compute, over $\mathcal{D}_{K}$, the empirical counterparts $\mathcal{R}_{K}$ of the risk 
$\mathcal{R}$  of the 
$\varepsilon$- confidence set using the empirical cumulative distribution of $f^*(X)$ instead of $F_{f}^*$. We also 
compute the proportion of classified instances $\mathcal{P}_K$.  
\end{enumerate}
From these experiments, we compute the mean and standard deviation of $\mathcal{R}_K$ and $\mathcal{P}_K$.
The results are reported in Table~\ref{table:tableb} and illustrated in Figure~\ref{fig:results}.
Next, for each model and each $\varepsilon \in \{\frac{k}{10}, \;\; k \in \{1, \ldots, 10\}\}$, we estimate the risk $\mathbf{R}$ for the plug-in 
$\varepsilon$-confidence set. We propose to use three popular classification methods for the 
estimation of $\eta^*$: {\it random forest, logistic regression} and {\it kernel rule} based on the Gaussian kernel and window parameter equal to $1$.
We perform the following simulation scheme.
We repeat independently $B$ times the following steps:
\begin{enumerate}
\item[$i)$] simulate three dataset $\mathcal{D}_{n}, \mathcal{D}_{N}, 
\mathcal{D}_{K}$ according to the considered model; 
\item[$ii)$] based on $\mathcal{D}_{n}$, we compute an estimate, denoted by 
$\hat{f}$, of $f^*$ with the random forest, the logistic regression or kernel rule procedure;
\item[$iii)$] based on $\mathcal{D}_{N}$, we compute the empirical cumulative distribution of 
$\hat{f}(X)$ (we recall that this step requires a dataset which contains only the features);
\item[$iv)$] finally, over $\mathcal{D}_{K}$, we compute the empirical counterpart $\mathbf{R}_K$ of the risk  
$\mathcal{R}$ and the proportion $\mathcal{P}_K$ of the data which are not rejected. 
\end{enumerate}

From these results, we compute the means and standard deviations of both empirical risks and proportions of classified instances
for $n \in \{100,1000\}$. We fix $N = 100$ and $K = 1000$. The results are illustrated in Figure~\ref{fig:results} and provided in 
Table~\ref{table:tablep} and~\ref{table:epstablep}.
\begin{table}
\begin{center}
\begin{tabular}{l || cc || cc}
\multicolumn{1}{c}{}& \multicolumn{2}{c}{Model 1} & \multicolumn{2}{c}{Model 2}\\
\hline\noalign{\smallskip}
$\varepsilon$ \; & $\mathcal{R}_{K}$ \; & $\mathcal{P}_K$ \; & $\mathcal{R}_{K}$ \; & $\mathcal{P}_K$ \\
\noalign{\smallskip}
\hline
\noalign{\smallskip}
1 \;& 0.39    (0.01)  \;& 1.00 (0.00) \;\; &\;\; 0.22 (0.01) \;& 1.00 (0.00) \\
0.9 \;& 0.38  (0.02) \;& 0.90 (0.01)  \;\; &\;\; 0.19 (0.01) \;& 0.90 (0.01)\\
0.8 \;&  0.37 (0.02) \;& 0.80 (0.02)  \;\; &\;\; 0.16 (0.01) \;& 0.80 (0.02)\\
0.7 \;&  0.35 (0.02) \;& 0.70 (0.02) \;\;  &\;\; 0.14 (0.01) \;& 0.70 (0.02)\\
0.6 \;& 0.34  (0.02) \;& 0.60 (0.02) \;\;  &\;\; 0.12 (0.01) \;& 0.60 (0.02) \\
0.5 \;& 0.33  (0.02) \;& 0.50 (0.02) \;\;  &\;\; 0.09 (0.01) \;& 0.50 (0.02)\\
0.4 \;& 0.31  (0.02) \;& 0.40 (0.02) \;\;  &\;\; 0.07 (0.01) \;& 0.40 (0.02) \\
0.3 \;& 0.29  (0.03) \;& 0.30 (0.02) \;\;  &\;\; 0.05 (0.01) \;& 0.30 (0.02) \\
0.2 \;& 0.27  (0.03) \;& 0.20 (0.02) \;\;  &\;\; 0.03 (0.01) \;& 0.20 (0.02) \\
0.1 \;& 0.24  (0.03) \;& 0.10 (0.01) \;\;  &\;\; 0.02 (0.01) \;& 0.10 (0.01)\\
\hline
\end{tabular}
\end{center}
\caption{\label{table:tableb} 
For each of the $B = 100$ repetitions and each model, we derive the estimates $\mathcal{R}_K$ of the risk
and the estimated proportions of classified instances $\mathcal{P}_K$
of the $\varepsilon$-confidence sets w.r.t. $\eps$.
We compute the means and standard deviations (between parentheses) over the $B = 100$ repetitions.
Left: the data are generated according to Model~1 -- Right: the data are generated according to Model~2.}
\end{table}

\begin{table}
\begin{center}
\footnotesize{
\begin{tabular}{l || ccc || ccc} 
\multicolumn{1}{c}{} & \multicolumn{6}{c}{{Model~1}}\\
\hline 
\multicolumn{1}{c}{} &  \multicolumn{3}{c}{$n = 100$} & \multicolumn{3}{c}{$n = 1000$} \\
\hline\noalign{\smallskip}
 $\varepsilon$ \;\;\;  & \;\;   \texttt{rforest} \;\;\; &  \texttt{logistic reg} \;\;\;& \texttt{kernel} & \;\;   \texttt{rforest} \;\;\; &  \texttt{logistic reg} \;\;\;& 
 \texttt{kernel} \\
\noalign{\smallskip}
\hline
\noalign{\smallskip}
 1    & 0.45 (0.02) & 0.43 (0.03) & 0.47 (0.03) \;\;&\;\; 0.42 (0.02) & 0.39 (0.02) & 0.42 (0.03) \\
 0.9  & 0.45 (0.02) & 0.42 (0.03) & 0.45 (0.03) \;\;&\;\;  0.41 (0.02) & 0.38 (0.02) & 0.41 (0.03) \\
 0.8  & 0.44 (0.02) & 0.42 (0.03) & 0.45 (0.03) \;\;&\;\;  0.40 (0.02) & 0.37 (0.02) & 0.39 (0.03) \\
 0.7  & 0.44 (0.03) & 0.41 (0.03) & 0.44 (0.03) \;\;&\;\;  0.39 (0.02) & 0.36 (0.02) & 0.38 (0.03) \\
 0.6  & 0.43 (0.03) & 0.40 (0.03) & 0.43 (0.03) \;\;&\;\;  0.38 (0.02) & 0.35 (0.02) & 0.37 (0.03) \\
 0.5  & 0.42 (0.03) & 0.39 (0.03) & 0.42 (0.03) \;\;&\;\;  0.37 (0.02) & 0.34 (0.02) & 0.36 (0.03) \\
 0.4  & 0.41 (0.03) & 0.38 (0.04) & 0.40 (0.04) \;\;&\;\; 0.36 (0.03) & 0.32 (0.03) & 0.34 (0.02) \\
 0.3  & 0.41 (0.04) & 0.37 (0.04) & 0.39 (0.04) \;\;&\;\; 0.35 (0.03) & 0.30 (0.03) & 0.33 (0.04) \\
 0.2  & 0.40 (0.04) & 0.35 (0.05) & 0.37 (0.05) \;\;&\;\;  0.34 (0.03) & 0.28 (0.04) &  0.30 (0.04) \\
 0.1  & 0.38 (0.06) & 0.33 (0.06) & 0.35 (0.06) \;\;&\;\;  0.32 (0.05) & 0.25 (0.05) & 0.27 (0.05) \\
\hline
\end{tabular}
}

\vspace*{0.25cm}

\footnotesize{
\begin{tabular}{l || ccc || ccc} 
\multicolumn{1}{c}{} & \multicolumn{6}{c}{{Model~2}}\\
\hline 
\multicolumn{1}{c}{} &  \multicolumn{3}{c}{$n = 100$} & \multicolumn{3}{c}{$n = 1000$} \\
\hline\noalign{\smallskip}
  $\varepsilon$ \;\;\;  & \;\;   \texttt{rforest} \;\;\; &  \texttt{logistic reg} \;\;\;& \texttt{kernel} & \;\;   \texttt{rforest} \;\;\; &  \texttt{logistic reg} \;\;\;& 
  \texttt{kernel} \\
\noalign{\smallskip}
\hline
\noalign{\smallskip}
 1    & 0.26 (0.02) & 0.24 (0.01) & 0.27 (0.05) \;\;&\;\;  0.24 (0.01) & 0.22 (0.01) & 0.23 (0.02) \\
 0.9  & 0.24 (0.02) & 0.21 (0.02) & 0.25 (0.05) \;\;&\;\;  0.22 (0.01) & 0.20 (0.01) & 0.20 (0.01) \\
 0.8  & 0.21 (0.02) & 0.18 (0.02) & 0.22 (0.04) \;\;&\;\;  0.19 (0.02) & 0.17 (0.02) & 0.18 (0.02) \\
 0.7  & 0.19 (0.02) & 0.16 (0.02) & 0.19 (0.04) \;\;&\;\;  0.16 (0.02) & 0.14 (0.02) & 0.15 (0.02) \\
 0.6  & 0.18 (0.02) & 0.13 (0.02) & 0.16 (0.04) \;\;&\;\;  0.15 (0.02) & 0.12 (0.02) & 0.13 (0.02) \\
 0.5  & 0.16 (0.03) & 0.11 (0.02) & 0.14 (0.04) \;\;&\;\;  0.12 (0.02) & 0.10 (0.02) & 0.11 (0.02) \\
 0.4  & 0.15 (0.03) & 0.09 (0.02) & 0.11 (0.03) \;\;&\;\;  0.11 (0.02) & 0.08 (0.02) & 0.08 (0.02) \\
 0.3  & 0.13 (0.03) & 0.07 (0.02) & 0.08 (0.03) \;\;&\;\;  0.09 (0.02) & 0.06 (0.02) & 0.06 (0.02) \\
 0.2  & 0.12 (0.03) & 0.05 (0.02) & 0.06 (0.02) \;\;&\;\;  0.08 (0.02) & 0.04 (0.01) &  0.04 (0.02) \\
 0.1  & 0.10 (0.04) & 0.03 (0.02) & 0.04 (0.02) \;\;&\;\;  0.06 (0.03) & 0.02 (0.01) & 0.02 (0.02) \\
\hline
\end{tabular}
}

\end{center}
\caption{\label{table:tablep}
For each of the $B = 100$ repetitions and each model, we derive the estimated 
risks $\mathbf{R}_K$ of three different plug-in $\varepsilon$-confidence sets w.r.t. $\eps$
and to the sample size $n$.
We compute the means and standard deviations (between parentheses) over the $B = 100$ repetitions. 
For each $\eps$ and each $n$, the plug-in $\varepsilon$-confidence sets are based on, from left to right, 
\texttt{rforest}, \texttt{logistic reg} and \texttt{kernel}, 
which are respectively the random forest, the logistic regression and the kernel rule procedures.
Top: the data are generated according to Model 1 -- Bottom: the data are generated according to Model 2.}
\end{table}
\begin{figure}
\begin{center}
\begin{tabular}{cc}
\includegraphics[width = 6cm, scale=1]{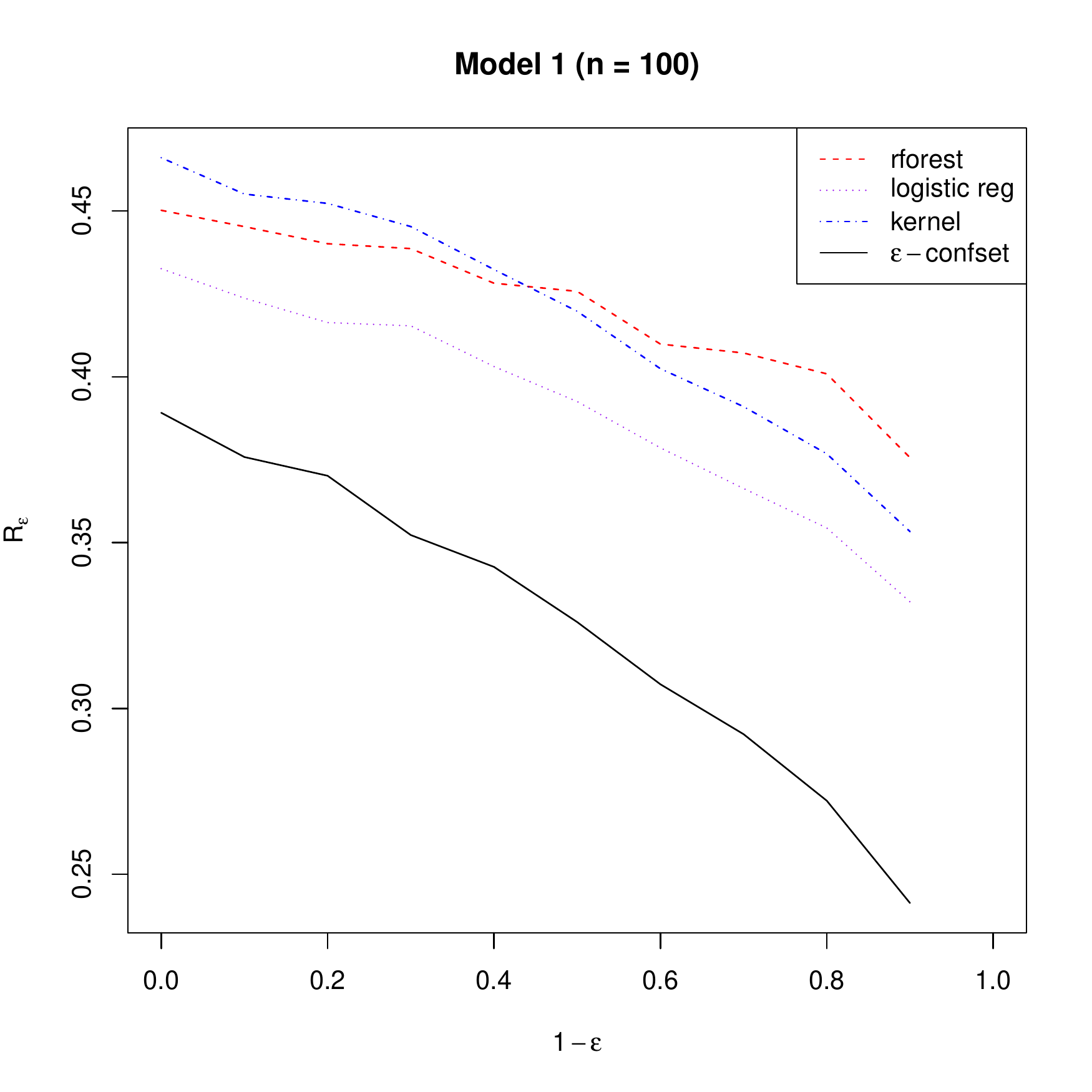} & \includegraphics[width = 6cm, scale=0.1]{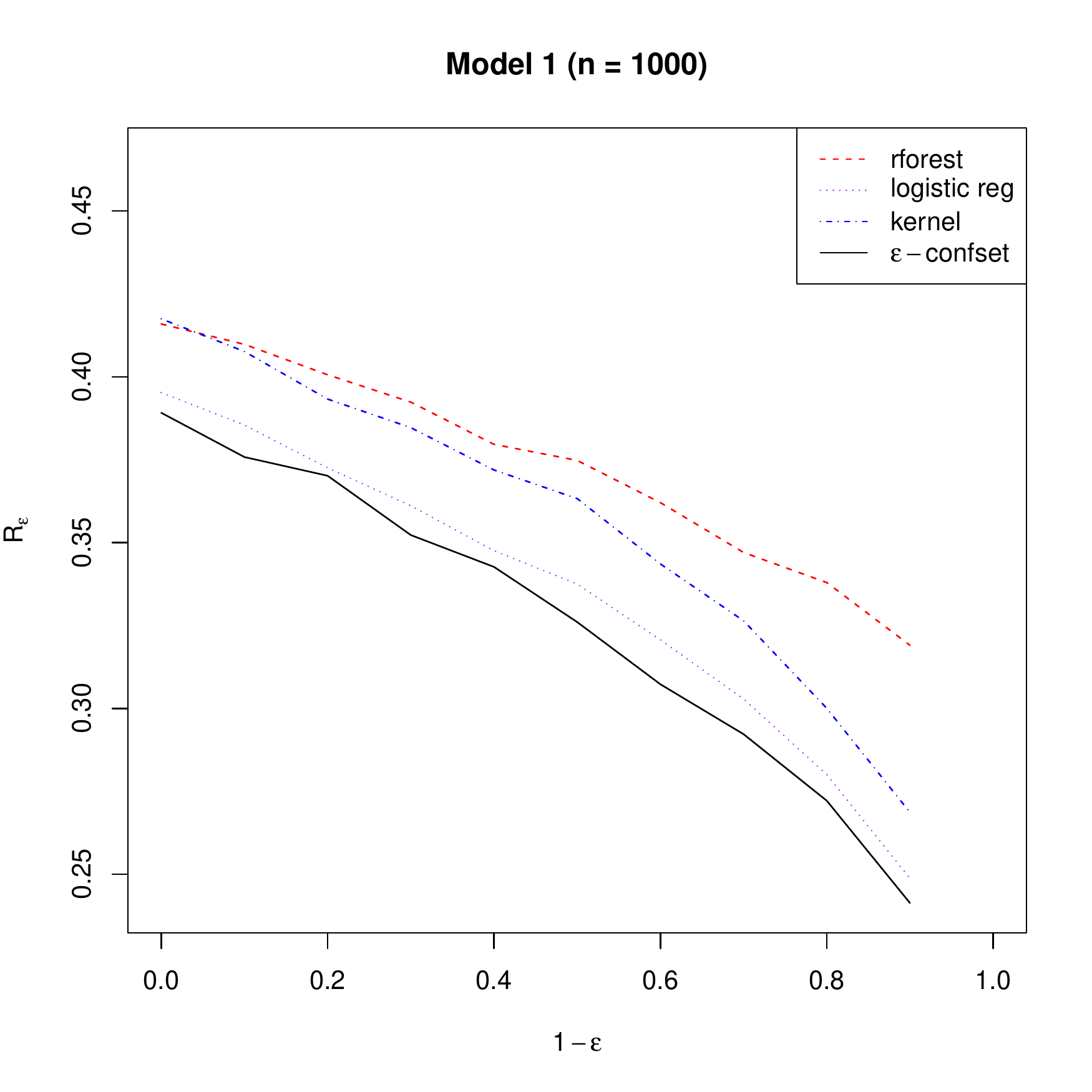}\\
\includegraphics[width = 6cm, scale=1]{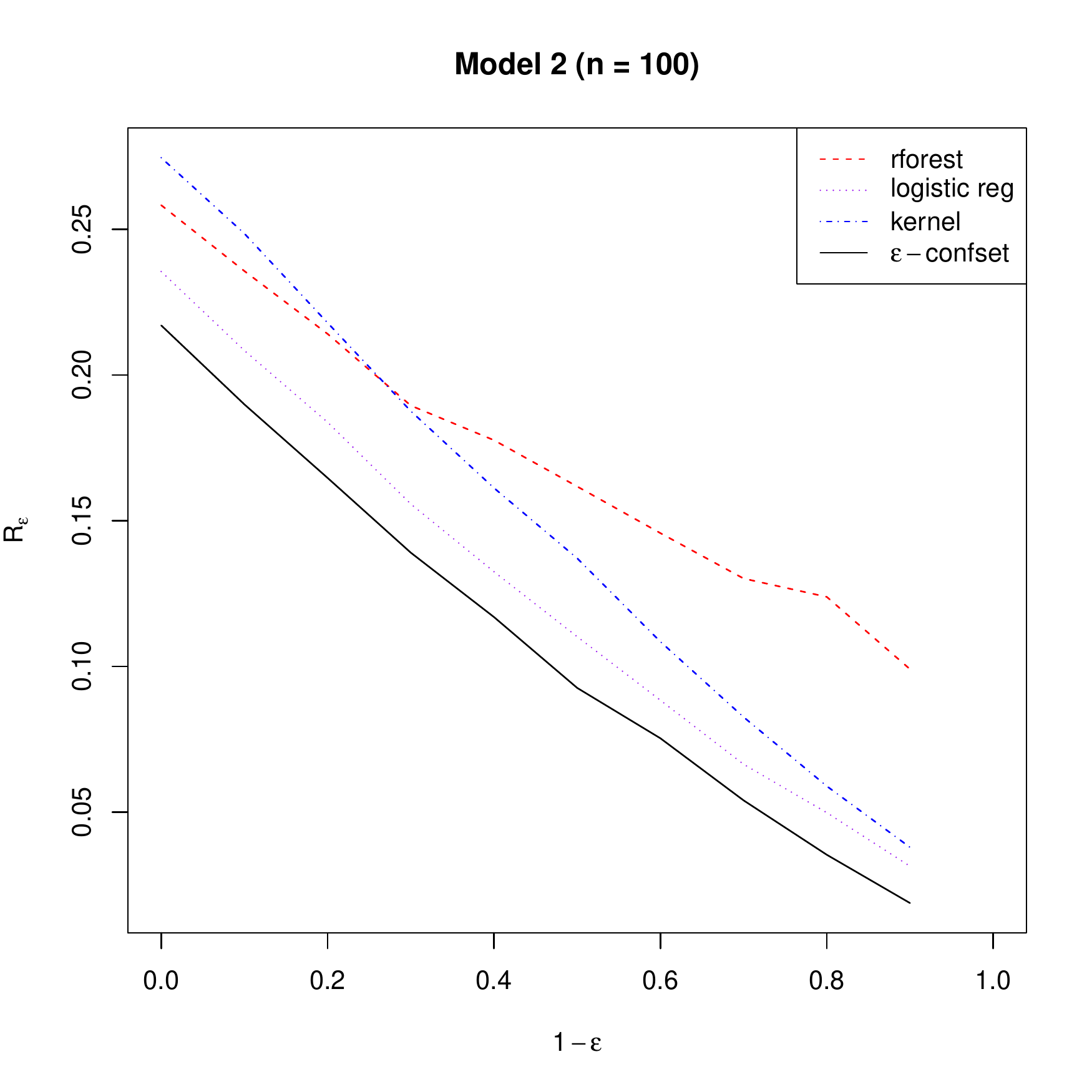} & \includegraphics[width = 6cm, scale=0.1]{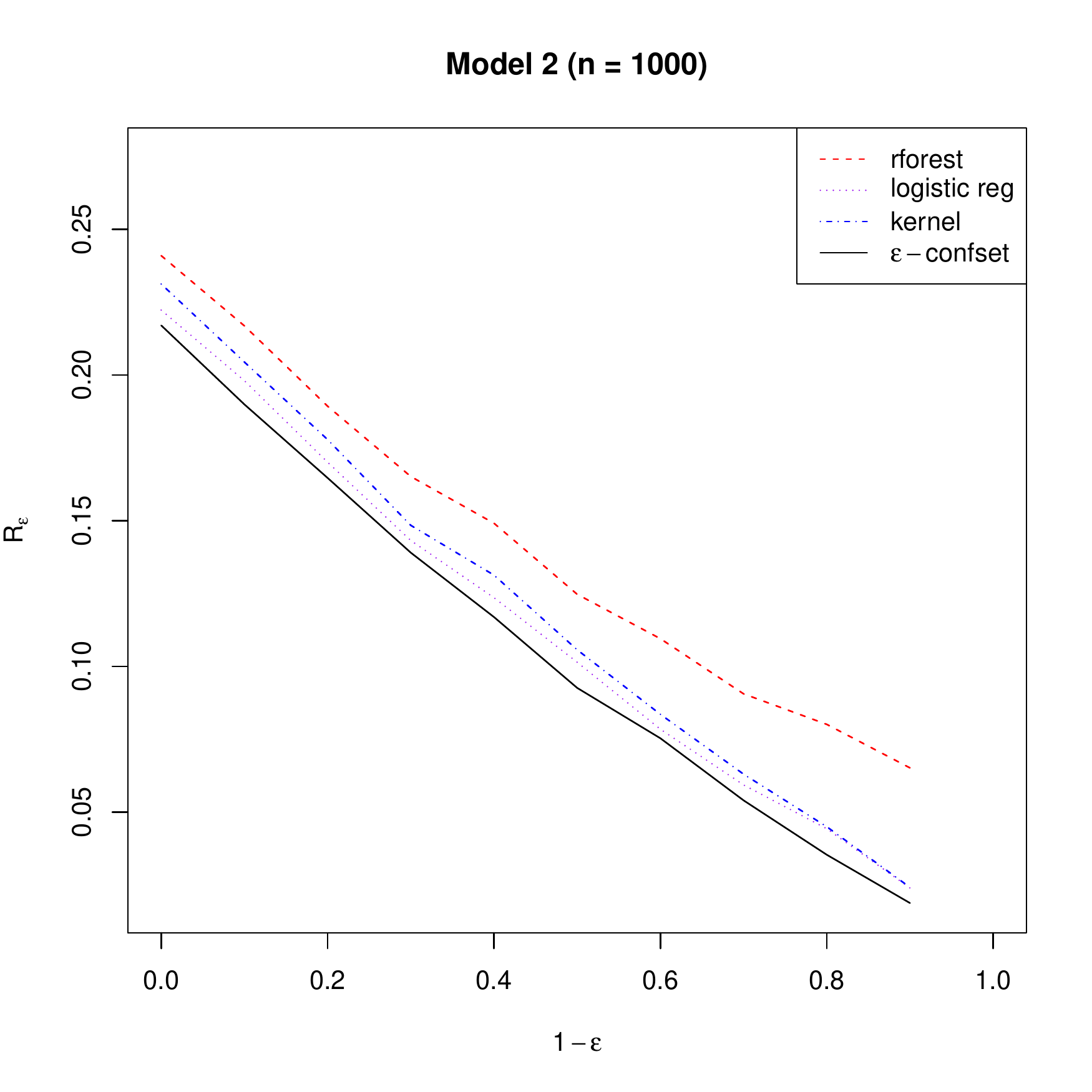}\\
\end{tabular}
\end{center}
\caption{\label{fig:results}
Visual description of the results reported in Table~\ref{table:tableb} and~\ref{table:tablep}.
For each model and each $n$, we plot, as a function of $1-\eps$, the mean ${\rm R}_{\eps}$ over the $B = 100$ 
repetitions of the estimated risks $\mathcal{R}_K$ of the $\eps$-confidence sets (solid line) and $\mathbf{R}_K$  
of the plug-in $\varepsilon$-confidence sets based on random forest (dashed line), logistic regression (dotted line) 
and kernel rule (dotted dashed line). 
Top: the data are generated according to Model 1 (left: $n = 100$; right: $n =1000$) -- 
Bottom: the data are generated according to Model 2 (left: $n = 100$; right: $n =1000$).}
\end{figure}
\begin{table}
\begin{center}
\footnotesize{
\begin{tabular}{l || ccc || ccc} 
\multicolumn{1}{c}{} & \multicolumn{6}{c}{{Model~1}}\\
\hline 
\multicolumn{1}{c}{} &  \multicolumn{3}{c}{$n = 100$} & \multicolumn{3}{c}{$n = 1000$} \\
\hline\noalign{\smallskip}
  $\varepsilon$ \;\;\;  & \;\;   \texttt{rforest} \;\;\; &  \texttt{logistic reg} \;\;\;& \texttt{kernel} & \;\;   \texttt{rforest} \;\;\; &  \texttt{logistic reg} \;\;\;& 
  \texttt{kernel} \\
\noalign{\smallskip}
\hline
\noalign{\smallskip}
 1    & 1.00 (0.00) & 1.00 (0.00) & 1.00 (0.00) \;\;&\;\;  1.00 (0.00) & 1.00 (0.00) & 1.00 (0.00) \\
 0.9  & 0.90 (0.03) & 0.90 (0.03) & 0.90 (0.04) \;\;&\;\;  0.91 (0.03) & 0.90 (0.03) & 0.90 (0.03) \\
 0.8  & 0.80 (0.04) & 0.79 (0.04) & 0.80 (0.04) \;\;&\;\;  0.81 (0.04) & 0.80 (0.04) & 0.80 (0.04) \\
 0.7  & 0.70 (0.05) & 0.69 (0.04) & 0.70 (0.04) \;\;&\;\;  0.69 (0.05) & 0.69 (0.05) & 0.69 (0.04) \\
 0.6  & 0.61 (0.05) & 0.60 (0.05) & 0.61 (0.05) \;\;&\;\;  0.60 (0.05) & 0.60 (0.05) & 0.60 (0.06) \\
 0.5  & 0.51 (0.05) & 0.49 (0.06) & 0.50 (0.06) \;\;&\;\;  0.51 (0.05) & 0.50 (0.05) & 0.51 (0.06) \\
 0.4  & 0.40 (0.05) & 0.40 (0.05) & 0.40 (0.05) \;\;&\;\;  0.40  (0.05)  &  0.40 (0.05) & 0.39  (0.05) \\
 0.3  & 0.30 (0.05) & 0.30 (0.05) & 0.30 (0.04) \;\;&\;\;  0.30 (0.05) & 0.30 (0.05) & 0.29 (0.05) \\
 0.2  & 0.20 (0.04) & 0.21 (0.05) & 0.21 (0.05) \;\;&\;\;  0.21 (0.04)  &  0.21 (0.04) &  0.20 (0.04) \\
 0.1  & 0.10 (0.03) & 0.10 (0.03) & 0.11 (0.03) \;\;&\;\;  0.11 (0.03)  &  0.10 (0.03) & 0.11  (0.03) \\
\hline
\end{tabular}
}

\vspace*{0.25cm}

\footnotesize{
\begin{tabular}{l || ccc || ccc} 
\multicolumn{1}{c}{} & \multicolumn{6}{c}{{Model~2}}\\
\hline 
\multicolumn{1}{c}{} &  \multicolumn{3}{c}{$n = 100$} & \multicolumn{3}{c}{$n = 1000$} \\
\hline\noalign{\smallskip}
  $\varepsilon$ \;\;\;  & \;\;   \texttt{rforest} \;\;\; &  \texttt{logistic reg} \;\;\;& \texttt{kernel} & \;\;   \texttt{rforest} \;\;\; &  \texttt{logistic reg} \;\;\;& 
  \texttt{kernel} \\
\noalign{\smallskip}
\hline
\noalign{\smallskip}
 1    & 1,00 (0.00)   & 1.00 (0.00)    & 1.00 (0.00)  \;\;&\;\; 1.00 (0.00) & 1.00 (0.00) & 1.00 (0.00) \\
 0.9  & 0.90 (0.04)   & 0.90 (0.03)    & 0.90 (0.04)  \;\;&\;\; 0.90 (0.03) & 0.90 (0.03) & 0.90 (0.03) \\
 0.8  & 0.81 (0.04)   & 0.81 (0.03)    & 0.81 (0.04)  \;\;&\;\; 0.80 (0.04) & 0.80 (0.04) & 0.80 (0.05) \\
 0.7  & 0.70 (0.05)   & 0.70 (0.04)    & 0.70 (0.04)  \;\;&\;\; 0.69 (0.05) & 0.69 (0.05) & 0.69 (0.05) \\
 0.6  & 0.61 (0.05)   & 0.61 (0.05)    & 0.60 (0.05)  \;\;&\;\; 0.61 (0.05) & 0.60 (0.05) & 0.60 (0.05) \\
 0.5  & 0.51 (0.05)   & 0.50 (0.04)    & 0.50 (0.04)  \;\;&\;\; 0.50 (0.05) & 0.51 (0.05) & 0.51 (0.05) \\
 0.4  & 0.39 (0.05)   & 0.40 (0.05)    & 0.39 (0.05)  \;\;&\;\; 0.40 (0.05) & 0.40 (0.05) & 0.40 (0.05) \\
 0.3  & 0.29 (0.05)   & 0.30 (0.04)    & 0.30  (0.05) \;\;&\;\; 0.30 (0.04) & 0.29 (0.05) & 0.29 (0.04) \\
 0.2  & 0.21 (0.04)   & 0.20 (0.04)    & 0.20  (0.04) \;\;&\;\; 0.20 (0.04) & 0.21 (0.04) &  0.21 (0.04) \\
 0.1  & 0.11 (0.03)   & 0.11 (0.03)    & 0.11  (0.03) \;\;&\;\; 0.12 (0.03) & 0.11 (0.03) & 0.11 (0.03) \\
\hline
\end{tabular}
}

\end{center}
\caption{\label{table:epstablep} 
For each of the $B = 100$ repetitions and each model, we derive the estimated 
proportion of classified instances $\mathcal{P}_K$ of three different plug-in $\varepsilon$-confidence sets w.r.t. $\eps$
and to the sample size $n$.
We compute the means and standard deviations (between parentheses) over the $B = 100$ repetitions. 
For each $\eps$ and each $n$, the plug-in $\varepsilon$-confidence sets are based on, from left to right, 
\texttt{rforest}, \texttt{logistic reg} and \texttt{kernel}, which are respectively the random forest, the logistic regression and the kernel rule procedures.
Top: the data are generated according to Model 1 -- Bottom: the data are generated according to Model 2.}
\end{table}
From our numerical study, we make several observations. First, as expected, the risk of the $\eps$-confidence sets is 
decreasing with $\eps$ as observed in 
Table~\ref{table:tableb}. In both models, the reject option contributes to improve the overall misclassification risk. As 
an example, we see that in Model~2 the estimated value 
of the misclassification risk, that is when $\eps$ = 1, equals $0.22$ whereas if $\eps = 0.1$ the estimated value of 
risk  is $0.02$ which is a significant improvement. 
Note that in Model~1, the classification problem is quite difficult and then the decrease of the risk seems to be 
slower and a bit less significant.
On the other hand, we also observe in Table~\ref{table:tableb} that the proportions of classified data match with the 
theoretical values.
Regarding Tables~\ref{table:tablep}-\ref{table:epstablep}, the same comments can be made in both models and whatever 
the used classification procedure.
Moreover, some features of Table~\ref{table:tablep} are worth commenting on. For fixed $\eps$ and for each scenario, 
the estimated risk of the all plug-in $\eps$-confidence 
sets decreases with $n$ which is the size of the sample used to estimate the regression function $\eta^*$. Furthermore, 
for $n = 1000$ and 
viewing Table~\ref{table:tableb}, we observe that the estimated risks of the plug-in $\eps$-confidence sets are close 
to the oracle ones. This illustrates the convergence 
result provided in Theorem~\ref{thm:mainThm}. However, we can see that the random forest procedure are outperformed 
by the other procedures (especially when $\eps$ is 
small). Indeed, the construction of plug-in $\eps$-confidence sets relies on the estimator of $\eta^*$: better estimators lead to better confidence sets.
Figure~\ref{fig:results} summarizes many aspects of our previous discussion.

\subsection{Importance of Assumptions (A1) and (A2)}
\label{subsec:assFail}

\begin{table}
\begin{center}
\footnotesize{
\begin{tabular}{ l || cc || cc} 
\multicolumn{1}{c}{} &  \multicolumn{2}{c}{\texttt{(A2) fails (CART)  }} & \multicolumn{2}{c}{\texttt{(A1) fails (kernel) }} 
 \\
\noalign{\smallskip}
\hline\noalign{\smallskip}
  $\varepsilon$ \;\;\;  & $\mathcal{P}_K$ &  $\mathbf{R}_K$ & $\mathcal{P}_K$ & $\mathbf{R}_K$   \\
\noalign{\smallskip}
\hline
\noalign{\smallskip}
 1     & \quad 1.00 (0.00)    & \quad 0.27 (0.03) \quad   & \quad 1.00 (0.00)   & \quad  0.32 (0.03)  \\
 0.9  & \quad 0.98 (0.04)    & \quad 0.27 (0.04) \quad    & \quad 0.90 (0.03)    & \quad  0.31 (0.03)  \\
 0.8  & \quad 0.90 (0.07)    & \quad 0.24 (0.03) \quad    & \quad 0.80 (0.04)    & \quad  0.29 (0.03)  \\
 0.7  & \quad 0.84 (0.10)    & \quad 0.22 (0.03) \quad    & \quad 0.70 (0.05)   & \quad  0.27 (0.04)  \\
 0.6  & \quad 0.79 (0.13)    & \quad 0.21 (0.04) \quad    & \quad 0.61 (0.05)    & \quad  0.26 (0.04)  \\
 0.5  & \quad 0.75 (0.16)    & \quad 0.21 (0.04) \quad    & \quad 0.50 (0.05)    & \quad  0.24 (0.04)  \\
 0.4  & \quad 0.60 (0.14)    & \quad 0.18 (0.05) \quad    & \quad 0.40 (0.05)    & \quad  0.23 (0.03)  \\
 0.3  & \quad 0.48 (0.13)    & \quad 0.18 (0.06) \quad    & \quad 0.30  (0.04)   & \quad  0.22 (0.03)  \\
 0.2  & \quad 0.39 (0.13)    & \quad 0.18 (0.06) \quad    & \quad 0.20  (0.04)   & \quad  0.20 (0.03)  \\
 0.1  & \quad 0.31 (0.12)    & \quad 0.16 (0.06) \quad    & \quad 0.11  (0.03)   & \quad  0.20 (0.04)  \\
\hline
\end{tabular}
}
\end{center}
\caption{\label{table:nonCont}
For each of the $B = 100$ repetitions, we derive the  estimated proportions of classified instances $\mathcal{P}_K$ and
the estimated risks $\mathbf{R}_K$ of the two plug-in $\varepsilon$-confidence sets w.r.t. $\eps$.
We compute the means and standard deviations (between parentheses) over the $B = 100$ repetitions. 
Left: the data are generated according to Model 2, then Assumption (A1) holds; the procedure used to build the 
plug-in $\eps$-confidence set is based on CART method, then {\bf Assumption (A2) fails} 
-- Right: the data are generated according to Model 3, then {\bf Assumption (A1) fails};
the procedure used to build the plug-in $\eps$-confidence set is based on kernels, then Assumption (A2) holds.}
\end{table}

In this section, we shed some light on the importance of Assumptions (A1) and (A2). More precisely, we study the behavior of 
plug-in $\eps$-confidence sets when one of these two assumptions is not satisfied.

We first consider a case where the cumulative distribution $F_{f}$ is continuous but not $F_{\hat{f}}$.
We consider the simulation scheme described in Section~\ref{subsec:a1a2} with Model~2 and parameters $n = 100$, $N = 100$ and  $K = 1000$.
But this time, the plug-in $\eps$-confidence set relies on the CART procedure which involves that the Assumption (A2) does not hold. 
The obtained results are reported in Table~\ref{table:nonCont}--Left.
Two observations can be made. First, judging by the estimated proportions of classified instances and by the associated standard deviations, we are not able to 
control these proportions. Therefore, one of the important feature of our procedure fails. Second, 
although the risk of misclassification is decreasing with $\eps$, this decrease is quite slow and cannot be as important as observed
with plug-in confidence sets studied in Section~\ref{subsec:a1a2} (see Table~\ref{table:tablep}). Indeed, for CART method and more generally if
Assumption (A2) does not hold, the proportion of rejected data is usually not large enough.   

Next, we study the reverse case where the cumulative distribution function $F_{\hat{f}}$ is continuous but not $F_f$.
We consider the following model.
\begin{itemize}
\item Model~3:
\begin{enumerate}
\item the feature $X  \overset{\mathcal{L}}{=} U$, where $U$ follows a uniform distribution on $[0,1]$;
\item conditional on $X$,  the label $Y$ is drawn according a Bernoulli distribution with parameter 
\begin{equation*}
\eta^*(X) = \frac{1}{5} \1_{\{X \leq 1/4\}} + \frac{2}{5} \1_{\{1/4 < X \leq 1/2\}} + \frac{3}{5} \1_{\{1/2 < \leq 3/4\}} + \frac{4}{5} \1_{\{3/4 < X\}}. 
\end{equation*}
\end{enumerate}
\end{itemize}
Then, for this model, $F_{f}$ is not continuous. Moreover,
we have that, for $\eps \in [0.5,1]$, $\mathcal{P}(\Gamma^{\bullet}_{\eps}) = 1$ and $\mathcal{R}(\Gamma^{\bullet}_{\eps}) = 3/10$,
and for $\eps \in ]0,0.5[$, $\mathcal{P}(\Gamma^{\bullet}_{\eps}) = 1/2$ and $\mathcal{R}(\Gamma^{\bullet}_{\eps}) = 1/5$.
For this model, we use the simulation scheme described in Section~\ref{subsec:a1a2} with
the plug-in $\eps$ confidence sets which relies on the kernel rule and the samples sizes $n= 1000$, $N = 100$ and $K = 1000$.
The results are provided in Table~\ref{table:nonCont}--Right.
As a remark, we first note that since Assumption (A2) is satisfied the proportions of classified instances match with the
theoretical values. Second, the estimated risk of misclassification of the plug-in $\eps$-confidence set decreases with $\eps$.
However, from our point of view, it is irrelevant to compare the performances of the $\eps$-confidence sets and those of the plug-in $\eps$-confidence sets.
Indeed, except for $\eps = 1$, the proportions of classified data differ. As an example, if $\eps = 0.7$, the estimated risk of the plug-in $\eps$-confidence set
is equal to $0.27$ which seems better than the risk of the $\eps$-confidence set. But, for $\eps = 0.7$ the proportion of the classified instances is larger for
the $\eps$-confidence set and equals~1.

\section{Conclusion}
\label{sec:conclusion}
In the classification with reject option framework, 
we introduce a new procedure that allows us to control exactly the rejection probability. 
The construction of the $\eps$-confidence sets and their plug-in approximations
relies on the cumulative distribution function of the score functions $f^*$ and $\hat{f}$.
Theoretical guarantees, especially rates of convergence, involve the continuity of these cumulative distribution
functions. Numerical experiments emphasize the importance of the continuity assumption.
As viewed in Section~\ref{sec:Estimateion}, the plug-in $\eps$-confidence set
is defined as a two steps algorithm whose second step consists in 
the estimation of the cumulative distribution function $F_{\hat{f}}$. 
Interestingly, this step does not require a set of labeled data that is suitable for semi-supervised learning.
In a future work, we intent to generalize our procedure to the multiclass case and
study procedures based on empirical risk minimization.

\section{Appendix}
\label{sec:Appendix}
This section gathers the proofs of our results.

\subsection{Proof of Proposition~\ref{prop:prop2}}

We first define the following events
\begin{eqnarray*}
\mathcal{A}_y & = & \{f^*(\Xp) \geq \alpha_{\varepsilon},  \card(\Gamma_s(\Xp)) \neq 1, s^{*}(\Xp) \neq y\},\;\; y= 0,1, \\
\mathcal{B}_y & = & \{f^*(\Xp) < \alpha_{\varepsilon},  \card(\Gamma_s(\Xp)) = 1,  s(\Xp) \neq y\},\;\; y = 0,1,\\
\mathcal{C} & = &  \{f(^*\Xp) \geq \alpha_{\varepsilon},  \card\left(\Gamma_s(\Xp)\right) = 1,  s^{*}(\Xp) \neq s(\Xp)\},\\
\mathcal{D} & = & \mathcal{A}_0 \cup \mathcal{A}_1 \cup \mathcal{B}_0 \cup \mathcal{B}_1,
\end{eqnarray*}
and the random variable
\begin{equation*}
U = \mathbf{1}_{\{s(\Xp) \neq \Yp, \card(\Gamma_s(\Xp)) = 1\}} - \mathbf{1}_{\{s^{*}(\Xp) \neq \Yp, \card(\Gamma_{\varepsilon}^{\bullet}(\Xp) )= 1\}}.
\end{equation*}
Since $\mathcal{P}(\Gamma_s) = \mathcal{P}(\Gamma_{\eps}^{\bullet}) = \eps$, the proof of the proposition relies on the decomposition of the conditional expectation of $U$ 
given $\Xp$ over the sets $\mathcal{C}$ and $\mathcal{D}$.
We have
\begin{multline*}
 \mathbb{E}\left[U \mathbf{1}_\mathcal{C}|\Xp\right] =  \mathbf{1}_\mathcal{C} \{\eta^*(\Xp)\mathbf{1}_{\{s^{*}(\Xp) = 1\}} - \eta^*(\Xp)\mathbf{1}_{\{s^{*}(\Xp) = 0\}} +\\ 
 (\eta^*(\Xp) - 1)\mathbf{1}_{\{s^{*}(\Xp)  =  1\}} + (1-\eta^*(\Xp))\mathbf{1}_{\{s^{*}(\Xp) = 0\}}\}.
\end{multline*}
Since, $s^{*}(\Xp) = 1$ and $s^{*}(\Xp) = 0$ imply respectively that $  \eta^*(\Xp) \geq 1/2$ and $ \eta^*(\Xp) \leq 1/2$, we obtain from the above decomposition
\begin{equation}
\label{eqproofintermC}
 \mathbb{E}\left[U \mathbf{1}_\mathcal{C}\right] = \mathbb{E}\left[|2\eta^*(\Xp) -1 | \mathbf{1}_\mathcal{C}\right].
\end{equation}

Next,
\begin{equation}
\label{eq:eqproof1}
\mathbb{E}\left[U \mathbf{1}_\mathcal{D}|\Xp\right] = \eta^*(\Xp)\mathbf{1}_{\mathcal{B}_1} + (1- \eta^*(\Xp))\mathbf{1}_{\mathcal{B}_0} - \eta^*(\Xp)\mathbf{1}_{\mathcal{A}_1} - 
(1- \eta^*(\Xp))\mathbf{1}_{\mathcal{A}_0}
\end{equation}
Since, $\mathcal{P}(\Gamma_s) = \mathcal{P}(\Gamma_{\varepsilon}^{\bullet}) =  \mathbb{P}\left(f^*( \Xp) \geq \alpha_{\varepsilon}  
\right)$, we deduce
\begin{equation*} 
\mathbb{P}(\card(\Gamma_s(\Xp) )= 1, f^*(\Xp) < \alpha_{\varepsilon}) = \mathbb{P}\left(\card(\Gamma_s(\Xp)) \neq 1, f^*(\Xp) \geq \alpha_{\varepsilon}\right),
\end{equation*}
which implies 
\begin{equation*}
 (1-\alpha_{\eps})\mathbb{E}\left[\1_{\mathcal{B}_0 \cup \mathcal{B}_1} \right] - (1-\alpha_{\eps})\mathbb{E}\left[\1_{\mathcal{A}_0\cup \mathcal{A}_1}\right]  = 0.
\end{equation*}
Therefore, adding this null term to~\eqref{eq:eqproof1}, we obtain
\begin{multline}
\label{eq:eqproof2}
\mathbb{E}\left[U \mathbf{1}_{\mathcal{D}}\right] = \mathbb{E}\left[(\alpha_{\eps} - (1-\eta^*(\Xp))\mathbf{1}_{\mathcal{B}_1} +  (\alpha_{\eps} - 
\eta^*(\Xp))\mathbf{1}_{\mathcal{B}_0} \right] \\ 
 +\mathbb{E}\left[(\eta^*(\Xp) - \alpha_{\eps})\mathbf{1}_{\mathcal{A}_0} + ((1- \eta^*(\Xp) ) - \alpha_{\eps})\mathbf{1}_{\mathcal{A}_1}\right].
\end{multline}
Note that, 
\begin{eqnarray*}
 f^*(\Xp) < \alpha_{\eps} \qquad\qquad\qquad \quad  \ \, & \Rightarrow  & (\alpha_{\eps} - (1 -  \eta^*(\Xp)) \geq  0   \;\;  {\rm and} \;\; (\alpha_{\eps} - \eta^*(\Xp)) \geq 
 0\\
f^*(\Xp) \geq \alpha_{\eps} \;\; {\rm and} \;\;  s^{*}(\Xp) \neq 1 & \Rightarrow & (1- \eta^*(\Xp) - \alpha_{\eps}) \geq 0\\
f^*(\Xp) \geq \alpha_{\eps} \;\; {\rm and} \;\;  s^{*}(\Xp) \neq 0 & \Rightarrow & (\eta^*(\Xp) - \alpha_{\eps}) \geq 0.
\end{eqnarray*}
Hence, from~\eqref{eq:eqproof2}, we can write
\begin{equation*}
\mathbb{E}\left[U \mathbf{1}_{\mathcal{D}}\right] = \mathbb{E}\left[|\eta^*(\Xp) - \alpha_{\eps}| \1_{\mathcal{A}_0 \cup \mathcal{B}_0}\right] + 
\mathbb{E}\left[|1 - \eta^*(\Xp) - \alpha_{\eps}| \1_{\mathcal{A}_1 \cup \mathcal{B}_1}\right].
\end{equation*}
Combining this result with~\eqref{eqproofintermC} shows that $ \mathbb{E}\left[U \mathbf{1}_{\mathcal{C} \cup  \mathcal{D}}\right] \geq 0 $, and provides in the same time 
the desired result.


\subsection{Proof of Proposition~\ref{prop:prop3}}

We first prove the following inequality for $\alpha,\tilde{\alpha} \in [0,1/2[$, $\alpha \leq \tilde{\alpha}$
\begin{equation}
\label{eq:ineqE1}
\mathbb{P}(s^{*}(\Xp) \neq \Yp) | f^*(\Xp) \geq \tilde{\alpha}) \leq \mathbb{P}(s^{*}(\Xp) \neq \Yp) | f^*(\Xp) \geq \alpha),
\end{equation}
Since for $\eps,\eps^{'} \in ]0,1]$ one has $\eps \leq \eps^{'} \Leftrightarrow \alpha_{\eps} \geq \alpha_{\eps^{'}}$,
a direct application of~\eqref{eq:ineqE1} yields the proposition.
In order to prove~\eqref{eq:ineqE1}, recall that
\begin{equation}
\label{eq:eqUtile}
\dfrac{x}{y} - \dfrac{z}{t} = \dfrac{1}{2yt} \left((x-z)(t+y) + (x+z)(t-y)\right), \quad \forall x,z \in \mathbb{R}, \,\,\forall y,t \in \mathbb{R}\setminus \{0 \} .
\end{equation}
Thus, if we define
\begin{eqnarray*}
C_1 & = & \mathbb{P}(s^{*}(\Xp) \neq \Yp, f^*(\Xp) \geq \alpha) - \mathbb{P}(s^{*}(\Xp) \neq \Yp, f^*(\Xp) \geq \tilde{\alpha})\\
C_2 & = & \mathbb{P}(f^*(\Xp) \geq \alpha) + \mathbb{P}(f^*(\Xp) \geq \tilde{\alpha})\\
C_3 & = & \mathbb{P}(s^{*}(\Xp) \neq \Yp, f^*(\Xp) \geq \alpha) + \mathbb{P}(s^{*}(\Xp) \neq \Yp, f^*(\Xp) \geq \tilde{\alpha})\\
C_4 & = & \mathbb{P}(f^*(\Xp) \geq \tilde{\alpha}) - \mathbb{P}(f^*(\Xp) \geq \alpha),
\end{eqnarray*} 
from~\eqref{eq:eqUtile}, we have
\begin{equation*}
\mathbb{P}(s^{*}(\Xp) \neq \Yp | f^*(\Xp) \geq \tilde{\alpha})  \leq  \mathbb{P}(s^{*}(\Xp) \neq \Yp | f^*(\Xp) \geq \alpha)  
\Leftrightarrow 
C_1C_2+C_3C_4 \geq 0.
\end{equation*}
Since $\mathbb{P}(s^{*}(\Xp) \neq \Yp | \Xp) = 1-f^*(\Xp)$, we deduce that
\begin{multline*}
C_1 C_2 + C_3C_4  = 
\mathbb{E}\left[(1-f^*(\Xp)) \1_{\{\alpha \leq f^*(\Xp) \leq \tilde{\alpha}\}}\right] \mathbb{E}\left[\1_{\{f^*(\Xp) \geq \alpha\}}
+ \1_{\{f^*(\Xp) \geq \tilde{\alpha}\}}\right]\\
-\mathbb{E}\left[(1-f^*(\Xp))\left(\1_{\{f^*(\Xp) \geq \alpha\}} + \1_{\{f^*(\Xp) \geq \tilde{\alpha}\}}\right)\right]
\mathbb{E}\left[\1_{\{\alpha \leq f^*(\Xp) \leq \tilde{\alpha}\}}\right].
\end{multline*}
Note that 
$\mathbb{E}\left[\1_{\{f^*(\Xp) \geq \alpha\}}\right] = \mathbb{E}\left[\1_{\{\alpha \leq f^*(\Xp) \leq \tilde{\alpha}\}}\right] + \mathbb{E}\left[\1_{\{f^*(\Xp) \geq 
\tilde{\alpha}\}}\right]$.\\ 
Hence, from the above decomposition, we obtain
\begin{multline*}
C_1 C_2 + C_3 C_4  = 2\mathbb{E}\left[\1_{\{\alpha \leq f^*(\Xp) \leq \tilde{\alpha}\}}\right]
\mathbb{E}\left[f^*(\Xp) \1_{\{f^*(\Xp) \geq \tilde{\alpha}\}}\right]\\
- 2 \mathbb{E}\left[\1_{\{f^*(\Xp) \geq \tilde{\alpha}\}}\right] \mathbb{E}\left[f^*(\Xp)\1_{\{\alpha \leq f^*(\Xp) \leq 
\tilde{\alpha}\}}\right].
\end{multline*}
Since,
\begin{eqnarray*}
\mathbb{E}\left[\1_{\{\alpha \leq f^*(\Xp) \leq \tilde{\alpha}\}} \right]
\mathbb{E}\left[f^*  (\Xp) \1_{\{f^*(\Xp) \geq \tilde{\alpha}\}}\right] &\geq & 
\tilde{\alpha} \mathbb{P}\left(\alpha \leq f^*(\Xp) \leq \tilde{\alpha}\right) \mathbb{P}(f^*(\Xp) \geq \tilde{\alpha}) \;\; {\rm and} 
\\
\mathbb{E}\left[\1_{\{f^*(\Xp) \geq \tilde{\alpha}\}}\right]\mathbb{E}\left[f^* (\Xp)\1_{\{\alpha \leq f^*(\Xp) \leq \tilde{\alpha}\}}\right] & \leq & 
\tilde{\alpha} \mathbb{P}(\alpha \leq f^*(\Xp) \leq \tilde{\alpha}) \mathbb{P}(f^*(\Xp) \geq \tilde{\alpha}), \\
\end{eqnarray*}
we deduce Inequality~\eqref{eq:ineqE1}.


\subsection{Proof of Proposition~\ref{Th:RiskBayes}}

This section is devoted to the proof of the result related to the Gaussian mixture model.
Before starting, let us state a few properties that will be often used. \\
Let us write for short $f_1$ and $f_0$ instead of $\eta^*$ and $1-\eta^*$ respectively, so that $f=\max \{ f_0 , f_1 \}$.
Hence, we can write:
$$
\eta^*(x) = f_1(x) =  \mathbb{P}(Y=1 | X=x) =   \frac{ \mathbb{P}(X=x | Y=1) }{ \mathbb{P}(X=x | Y=1) + \mathbb{P}(X=x | Y=0)   } :=   \frac{ p_1 (x) }{ p_1 (x) +p_0 (x) },
$$
for any $x\in \mathcal{X}$.
Then, for $y=0,1$, using the fact that given $Y_\bullet = y$, the random variable $ X_\bullet^\top  \Sigma^{-1}  (  \mu_0-  \mu_1  ) \sim \mathcal{N} \left( \mu_y^\top   \Sigma^{-1}  (  \mu_0-  \mu_1  )   \ , \    \Vert   \mu_1-  \mu_0    \Vert_{ \Sigma^{-1}}^2  \right) $ we get, on the event $\{Y_\bullet = y\}$
\begin{eqnarray}
	\label{eq:SneY1}
		s^*(X_\bullet)  \neq Y_\bullet
		&\Leftrightarrow &
		f^*(X_\bullet) = f_{1-y}(X_\bullet)
		\quad 
		\Leftrightarrow 
		\quad
		f_y(X_\bullet) \leq \frac{1}{2}
		\quad
		\Leftrightarrow
		 \quad
		\log \left(  \frac{p_{1-y}(X_\bullet)}{p_y(X_\bullet)}\right) \geq 0
		\nonumber
		\\
		&\Leftrightarrow &
		X_\bullet ^\top  \Sigma^{-1}  (  \mu_{1-y}-  \mu_y  )    -   \frac{1}{2}     \mu_{1-y}^\top   \Sigma^{-1}     \mu_{1-y}    +    \frac{1}{2}  \mu_y^\top   \Sigma^{-1}     \mu_y    \geq 0 
		\nonumber 
		\\
		&\Leftrightarrow &
		\left( X_\bullet - \mu_y   \right)^\top  \Sigma^{-1} (  \mu_{1-y}-  \mu_y  )    -   \frac{1}{2}     \Vert   \mu_1-  \mu_0    \Vert_{ \Sigma^{-1}}^2 \geq 0 
		\nonumber 
		\\
		&\Leftrightarrow &
		\frac{\left( X_\bullet - \mu_y   \right)^\top  \Sigma^{-1}  (  \mu_{1-y}-  \mu_y  ) }{   \Vert   \mu_1-  \mu_0    \Vert_{ \Sigma^{-1}}     }      -   \frac{1}{2}     \Vert   \mu_1-  \mu_0    \Vert_{ \Sigma^{-1}}   \geq 0 .
\end{eqnarray}
where $\Vert \cdot \Vert_{\Sigma^{-1}} $ denotes the norm under $ \Sigma^{-1} $: $\Vert \mu \Vert_{\Sigma^{-1}} ^2 = \mu^\top \Sigma^{-1}  \mu$, for any $\mu \in \mathcal{X}$.

\subsubsection{Intermediate results}
The proof of Proposition~\ref{Th:RiskBayes} relies on two intermediate results. Then we state them first and give their proofs. They bring into play the cumulative distribution $F_f^*$.


\begin{proposition}\label{prop:interm1} 
	Let $y\in \{0,1\}$. Conditional on the event $Y_\bullet=y$ we have
\begin{multline*}
		F_f^*( f_{1-y}(X_\bullet)) = \Phi \left(    \frac{    \left( X_\bullet - \mu_y\right)  ^\top  \Sigma^{-1}  (  \mu_{1-y}-  \mu_y ) }{    \Vert \mu_1  -    \mu_0 \Vert_{ \Sigma^{-1}}  }      \right)      + 
\\
		    \Phi \left(      \frac{    \left( X_\bullet - \mu_y\right)  ^\top  \Sigma^{-1}  (  \mu_{1-y}-  \mu_y  ) }{    \Vert \mu_1  -    \mu_0 \Vert_{ \Sigma^{-1}}  }  -  \Vert   \mu_1-  \mu_0 \Vert_{ \Sigma^{-1}}      \right)    - 1,
\end{multline*}
where $\Phi$ is the standard normal cumulative distribution function.
\end{proposition}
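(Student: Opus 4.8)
The plan is to pass everywhere to the log--likelihood ratio $L(x):=\log\bigl(p_1(x)/p_0(x)\bigr)$ — an affine function of $x$ for the Gaussian mixture — and to reduce the evaluation of $F_f^*$ to a one--dimensional Gaussian computation. First I would record that $\eta^*(x)=p_1(x)/(p_0(x)+p_1(x))=(1+e^{-L(x)})^{-1}$, so that, the logistic function being increasing, $f^*(x)=\max\{\eta^*(x),1-\eta^*(x)\}=(1+e^{-|L(x)|})^{-1}$ and, for every $t\ge 1/2$,
\[
\{f^*(X)\le t\}=\bigl\{|L(X)|\le \ell(t)\bigr\},\qquad \ell(t):=\log\tfrac{t}{1-t}\ge 0 .
\]
Using the Gaussian form of $p_0,p_1$ as in the display preceding \eqref{eq:SneY1}, $L$ is affine in $x$, and the fact recalled there that $X^\top\Sigma^{-1}(\mu_1-\mu_0)$ is Gaussian with variance $\Vert\mu_1-\mu_0\Vert_{\Sigma^{-1}}^2$, together with a completion of squares for the constant term, gives that conditionally on $\{Y=y\}$ (for the generic pair $(X,Y)$ defining $F_f^*$)
\[
L(X)\sim\mathcal N\!\Bigl((-1)^{1+y}\,\tfrac12\Vert\mu_1-\mu_0\Vert_{\Sigma^{-1}}^2,\ \Vert\mu_1-\mu_0\Vert_{\Sigma^{-1}}^2\Bigr).
\]

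Second, since the marginal of $X$ is the equal--weight mixture of $\mathcal N(\mu_0,\Sigma)$ and $\mathcal N(\mu_1,\Sigma)$, writing $\Delta:=\mu_1-\mu_0$ and $\Vert\cdot\Vert:=\Vert\cdot\Vert_{\Sigma^{-1}}$, I would compute, for every $a\ge 0$,
\[
F_f^*\bigl((1+e^{-a})^{-1}\bigr)=\mathbb{P}\bigl(|L(X)|\le a\bigr)=\tfrac12\!\!\sum_{y\in\{0,1\}}\!\!\Bigl[\Phi\Bigl(\tfrac{a}{\Vert\Delta\Vert}-(-1)^{1+y}\tfrac{\Vert\Delta\Vert}{2}\Bigr)-\Phi\Bigl(\tfrac{-a}{\Vert\Delta\Vert}-(-1)^{1+y}\tfrac{\Vert\Delta\Vert}{2}\Bigr)\Bigr].
\]
Expanding the four $\Phi$--terms and substituting $\Phi(-u)=1-\Phi(u)$ in the two terms carrying $-a$, the sum collapses and one obtains the closed form
\[
F_f^*\bigl((1+e^{-a})^{-1}\bigr)=\Phi\Bigl(\tfrac{a}{\Vert\Delta\Vert}+\tfrac{\Vert\Delta\Vert}{2}\Bigr)+\Phi\Bigl(\tfrac{a}{\Vert\Delta\Vert}-\tfrac{\Vert\Delta\Vert}{2}\Bigr)-1,\qquad a\ge 0 .
\]

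Third, I would specialise the argument to $t=f_{1-y}(X_\bullet)$ on $\{Y_\bullet=y\}$. Note that $f_{1-y}(X_\bullet)=f^*(X_\bullet)\ge 1/2$ exactly on the sub--event $\{s^*(X_\bullet)\ne y\}$, which is the only part of $\{Y_\bullet=y\}$ that matters when this proposition is fed into Proposition~\ref{Th:RiskBayes}; there $f_1(X_\bullet)=(1+e^{-L(X_\bullet)})^{-1}$ for $y=0$ and $f_0(X_\bullet)=(1+e^{L(X_\bullet)})^{-1}$ for $y=1$, so the relevant value of $a$ is $L(X_\bullet)$, respectively $-L(X_\bullet)$. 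In both cases the same affine rearrangements used to derive \eqref{eq:SneY1} yield
\[
\frac{a}{\Vert\Delta\Vert}=\frac{(X_\bullet-\mu_y)^\top\Sigma^{-1}(\mu_{1-y}-\mu_y)}{\Vert\mu_1-\mu_0\Vert_{\Sigma^{-1}}}-\frac{\Vert\mu_1-\mu_0\Vert_{\Sigma^{-1}}}{2},
\]
so $\tfrac{a}{\Vert\Delta\Vert}+\tfrac{\Vert\Delta\Vert}{2}$ and $\tfrac{a}{\Vert\Delta\Vert}-\tfrac{\Vert\Delta\Vert}{2}$ become precisely the two arguments of $\Phi$ in the statement. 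Substituting into the closed form above concludes the proof.

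The main difficulty is organisational rather than conceptual: one must track signs carefully so that the cases $y=0$ and $y=1$ collapse to a single expression (the completion of squares and the identity $\Phi(-u)=1-\Phi(u)$ being the two places where this happens), and one must note that the reduction $\{f^*(X)\le t\}=\{|L(X)|\le\ell(t)\}$ is turned into a difference of two Gaussian tail probabilities only for $t\ge 1/2$, i.e.\ on $\{s^*(X_\bullet)\ne Y_\bullet\}$, which is exactly the regime in which Proposition~\ref{prop:interm1} is invoked.
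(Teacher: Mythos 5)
Your proposal is correct and takes essentially the same route as the paper: both reduce $\{f^*(X)\le \alpha\}$ (for $\alpha\ge 1/2$) to a statement about the Gaussian log-likelihood ratio, obtain the closed form $F_f^*(\alpha)=\Phi\left(\tfrac{a}{\Vert\mu_1-\mu_0\Vert_{\Sigma^{-1}}}+\tfrac{\Vert\mu_1-\mu_0\Vert_{\Sigma^{-1}}}{2}\right)+\Phi\left(\tfrac{a}{\Vert\mu_1-\mu_0\Vert_{\Sigma^{-1}}}-\tfrac{\Vert\mu_1-\mu_0\Vert_{\Sigma^{-1}}}{2}\right)-1$ by averaging two one-dimensional Gaussian interval probabilities and collapsing them with $\Phi(x)+\Phi(-x)=1$, and then substitute $\alpha=f_{1-y}(X_\bullet)$ via the same affine identity as in \eqref{eq:SneY1}; your grouping by $|L(X)|\le a$ is just a repackaging of the paper's four terms $A_1,\dots,A_4$. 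Your explicit caveat that the argument only covers the sub-event $\{f_{1-y}(X_\bullet)\ge 1/2\}$ is also implicit in the paper's proof (where $u$ is defined on $[1/2,1)$ only) and is harmless, since Proposition~\ref{prop:bayesProba} invokes the formula only on that event.
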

\begin{proof}
To prove this result, we need to investigate the function $F_f^*(\cdot) = \mathbb{P}(f^*(X) \leq \cdot)$.
Let $\alpha\in[1/2,1]$. We have
\begin{multline}
 \mathbb{P}(f^*(X) \leq \alpha)  =  
\mathbb{P}(f^*(X) \leq \alpha , f_1(X) \geq f_0(X) ) +  \mathbb{P}(f^*(X) \leq \alpha , f_1(X) \leq f_0(X) ) =  
 \\ 
\frac{1}{2}\mathbb{P}(f_1(X) \leq \alpha , f_1(X) \geq f_0(X) | Y=1) + \frac{1}{2}  \mathbb{P}(f_1(X) \leq \alpha , f_1(X) \geq f_0(X) | Y=0 ) + 
 \\
\frac{1}{2}\mathbb{P}(f_0(X) \leq \alpha , f_1(X) \leq f_0(X) | Y=1) + \frac{1}{2}  \mathbb{P}(f_0(X) \leq \alpha , f_1(X) \leq f_0(X) | Y=0 ) ,
\label{eq:AnnStuGEtap1}
\end{multline}
where we used in the last equality the fact that $Y$ is a Bernoulli random variable with parameter $1/2$. As already seen, we have for $y\in \{0,1\}$,
$$
f_y(x) = \mathbb{P}\left( Y=y | X=x \right) =  \frac{ p_y (x)}{ p_1 (x)+p_0 (x)}.
$$
Hence, denoting by $u$ the function from $[1/2, 1)$ into $[1, +\infty)$ defined by $ u(\alpha) = \frac{\alpha}{1-\alpha}$ and fixing this notation in the above relation~\eqref{eq:AnnStuGEtap1}, we get
\begin{eqnarray}
\label{eq:proofG}
\mathbb{P}(f^*(X) \leq \alpha) = & \frac{1}{2}& \left[ 
 \mathbb{P}\left(\frac{p_1 (X)}{p_0(X)}   \in  [1,u(\alpha) ] \  | \  Y=1\right) +  \mathbb{P}\left(\frac{p_1(X)}{p_0(X)}   \in  [1,u(\alpha) ]  \  |  \ Y=0 \right)
\right.   \nonumber  \\
&&+
\left.
\mathbb{P}\left(\frac{p_0(X)}{p_1(X)}   \in  [1,u(\alpha) ] \  | \  Y=1\right) +  \mathbb{P}\left(\frac{p_0(X)}{p_1(X)}   \in  [1,u(\alpha) ]  \  |  \ Y=0 \right)
\right]    \nonumber  \\
:=& & \frac{1}{2} (A_1 + A_2 + A_3 + A_4).
\end{eqnarray}
All of the terms $A_1, A_2, A_3, A_4$ will be treated in the same way. Then let us consider $A_1$ for instance: using 
very close reasoning as in~\eqref{eq:SneY1} with $y=1$, we have
\begin{eqnarray*}
A_1  &  =  &  \mathbb{P}\left( 0 \leq  \log \left( \frac{p_1(X)}{p_0(X)} \right) \leq   \log \left(u(\alpha)  \right)  
\  | \  Y=1\right)  
\\
& = & 
  \mathbb{P}\left( 0 \leq  
- \left( X - \mu_1\right)^\top  \Sigma^{-1}  (  \mu_0-  \mu_1  )    + \frac{1}{2}     \Vert   \mu_1-  \mu_0    \Vert_{ 
\Sigma^{-1}}^2
  \leq   \log \left(u(\alpha)  \right)  \  | \  Y=1\right)  
\\
& = & 
  \mathbb{P}\left( 0 \leq  
Z + \frac{1}{2}     \Vert   \mu_1-  \mu_0    \Vert_{ \Sigma^{-1}}
 \leq 
 \frac {\log \left(u(\alpha)  \right)}{ \Vert   \mu_1-  \mu_0    \Vert_{ \Sigma^{-1}}}      \right)  
\\
& = & 
  \mathbb{P}\left( -  \frac{1}{2}     \Vert   \mu_1-  \mu_0    \Vert_{ \Sigma^{-1}}      \leq  Z  \leq   \frac {\log \left(u(\alpha) \right)}{ \Vert   \mu_1-  \mu_0    \Vert_{ \Sigma^{-1}}}     -  \frac{1}{2}     \Vert   \mu_1-  \mu_0    \Vert_{ \Sigma^{-1}}   \right),
\end{eqnarray*}
where $Z$ is normally distributed. In the same way, we get
\begin{eqnarray*}
A_1 = A_4 & = &  \mathbb{P}\left(  - \frac{  \Vert    \mu_1-  \mu_0   \Vert_{ \Sigma^{-1}} }{2}    \leq Z  \leq  \frac {\log \left(u(\alpha)  \right)}{ \Vert    \mu_1-  \mu_0    \Vert_{ \Sigma^{-1}}}    -  \frac{  \Vert    \mu_1-  \mu_0   \Vert_{ \Sigma^{-1}} }{2}   \right)
\\
A_2 = A_3 &  =  &   \mathbb{P}\left(  \frac{  \Vert    \mu_1-  \mu_0   \Vert_{ \Sigma^{-1}} }{2}     \leq Z  \leq \frac {\log \left(u(\alpha)  \right)}{ \Vert    \mu_1-  \mu_0 \Vert_{ \Sigma^{-1}}}    +  \frac{  \Vert   \mu_1-  \mu_0    \Vert_{ \Sigma^{-1}} }{2}   \right).
\end{eqnarray*}
Coming back to~\eqref{eq:proofG} and using twice the following relation $ \Phi( x)  +  \Phi( -x) = 1$ for any $x\in \mathbb{R}$ which is valid for the normal distribution, we easily get
\begin{eqnarray}
	\label{eq:AppAIntroG}
&& F_f^*(\alpha) =  \mathbb{P}(f^*(X) \leq \alpha) \nonumber \\
& = &  
\mathbb{P}\left(  - \frac{  \Vert    \mu_1-  \mu_0    \Vert_{ \Sigma^{-1}} }{2}    \leq Z  \leq  \frac {\log \left(u(\alpha)  \right)}{ \Vert    \mu_1-  \mu_0    \Vert_{ \Sigma^{-1}}}    -  \frac{  \Vert    \mu_1-  \mu_0   \Vert_{ \Sigma^{-1}} }{2}   \right)
 \nonumber
\\
&& 
+
 \mathbb{P}\left(  \frac{  \Vert   \mu_1-  \mu_0    \Vert_{ \Sigma^{-1}} }{2}     \leq Z  \leq \frac {\log \left(u(\alpha)  \right)}{ \Vert    \mu_1-  \mu_0   \Vert_{ \Sigma^{-1}}}    +  \frac{  \Vert     \mu_1-  \mu_0   \Vert_{ \Sigma^{-1}} }{2}   \right) 
\nonumber  \\
& = & 
\Phi \left(  \frac {\log \left(u(\alpha)  \right)}{ \Vert   \mu_1-  \mu_0   \Vert_{ \Sigma^{-1}}}    -  \frac{  \Vert    \mu_1-  \mu_0  \Vert_{ \Sigma^{-1}} }{2}   \right) 
+
\Phi \left(  \frac {\log \left(u(\alpha)  \right)}{ \Vert   \mu_1-  \mu_0  \Vert_{ \Sigma^{-1}}}    +  \frac{  \Vert   \mu_1-  \mu_0   \Vert_{ \Sigma^{-1}} }{2}   \right)  
\nonumber \\
&&
- 
\left( 
\Phi \left(   - \frac{  \Vert   \mu_1-  \mu_0   \Vert_{ \Sigma^{-1}} }{2}     \right)   
+\Phi \left(   \frac{  \Vert    \mu_1-  \mu_0  \Vert_{ \Sigma^{-1}} }{2}    \right)
\right)
\nonumber \\
&=&
 \Phi \left(  \frac{  \Vert     \mu_1-  \mu_0   \Vert_{ \Sigma^{-1}} }{2}   +    \frac {\log \left(u(\alpha)  \right)}{ \Vert     \mu_1-  \mu_0  \Vert_{ \Sigma^{-1}}}     \right)  
-
\Phi \left(   \frac{  \Vert    \mu_1-  \mu_0   \Vert_{ \Sigma^{-1}} }{2}   - \frac {\log \left(u(\alpha)  \right)}{ \Vert     \mu_1-  \mu_0   \Vert_{ \Sigma^{-1}}}     \right) 
\nonumber \\
& =& 
\mathbb{P}\left( Z \in \left[     \frac{  \Vert    \mu_1-  \mu_0  \Vert_{ \Sigma^{-1}} }{2}   - \frac {\log 
\left(u(\alpha) \right)}{ \Vert    \mu_1-  \mu_0   \Vert_{ \Sigma^{-1}}}   \  ,  \  \frac{  \Vert   \mu_1-  \mu_0 
\Vert_{ \Sigma^{-1}} }{2}   +    \frac {\log \left(u(\alpha)  \right)}{ \Vert    \mu_1-  \mu_0   \Vert_{ \Sigma^{-1}}}   
\right]     \right) .
\end{eqnarray}
At this point, we are ready to evaluate the quantity $F_f^*( f_{1-y}(X_\bullet))$ on the event $\{Y_\bullet=y\}$ with 
$y\in \{0,1\}$. Indeed, according to~\eqref{eq:AppAIntroG}, we only need to evaluate $\frac {\log \left(u(\alpha)  
\right)}{ \Vert     \mu_1-  \mu_0   \Vert_{ \Sigma^{-1}}}  $ for $\alpha = f_{1-y}(X_\bullet)$. Thanks 
to~\eqref{eq:SneY1} we can write when  $Y_\bullet=y$
$$
\frac{ \log(u(f_{1-y}(X_\bullet))) }{\Vert \mu_1  -    \mu_0 \Vert_{ \Sigma^{-1}}    }
= 
\frac{  \log \left(\frac{p_{1-y}(X_\bullet)}{p_{y}(X_\bullet)} \right) }{ \Vert \mu_1  -    \mu_0 \Vert_{ \Sigma^{-1}}    
}
= 
\frac{  \left( X_\bullet - \mu_y\right)  ^\top  \Sigma^{-1}  (  \mu_{1-y}-  \mu_y  ) }{  \Vert \mu_1  -    \mu_0 
\Vert_{ \Sigma^{-1}}   }   -   \frac{1}{2}   \Vert \mu_1  -    \mu_0 \Vert_{ \Sigma^{-1}}  .
$$
Finally, using~\eqref{eq:AppAIntroG}, we get when $Y_\bullet=y$,  
\begin{multline*}
		F_f^*( f_{1-y}(X_\bullet)) =
 \\
		\mathbb{P}\left( Z \in \left[   \Vert   \mu_1-  \mu_0 \Vert_{ \Sigma^{-1}}   -  \frac{    \left( X_\bullet - \mu_y\right)  ^\top  \Sigma^{-1}  (  \mu_{1-y}-  \mu_y  ) }{    \Vert \mu_1  -    \mu_0 \Vert_{ \Sigma^{-1}}  }      \  ,  \        \frac{    \left( X_\bullet - \mu_y\right)  ^\top  \Sigma^{-1}  (  \mu_{1-y}-  \mu_y  ) }{    \Vert \mu_1  -    \mu_0 \Vert_{ \Sigma^{-1}}  }       \right]     \right) =
 \\
		\Phi \left(    \frac{    \left( X_\bullet - \mu_y\right)  ^\top  \Sigma^{-1}  (  \mu_{1-y}-  \mu_y  ) }{    \Vert \mu_1  -    \mu_0 \Vert_{ \Sigma^{-1}}  }      \right)     -       \Phi\left(   \Vert   \mu_1-  \mu_0 \Vert_{ \Sigma^{-1}}    -    \frac{    \left( X_\bullet - \mu_y\right)  ^\top  \Sigma^{-1}  (  \mu_{1-y}-  \mu_y  ) }{    \Vert \mu_1  -    \mu_0 \Vert_{ \Sigma^{-1}}  }      \right) = 
 \\
		\Phi \left(    \frac{    \left( X_\bullet - \mu_y\right)  ^\top  \Sigma^{-1}  (  \mu_{1-y}-  \mu_y  ) }{    \Vert \mu_1  -    \mu_0 \Vert_{ \Sigma^{-1}}  }      \right)      +     \Phi\left(      \frac{    \left( X_\bullet - \mu_y\right)  ^\top  \Sigma^{-1}  (  \mu_{1-y}-  \mu_y  ) }{    \Vert \mu_1  -    \mu_0 \Vert_{ \Sigma^{-1}}  }  -  \Vert   \mu_1-  \mu_0 \Vert_{ \Sigma^{-1}}      \right)    - 1,
\end{multline*}
where we have also used the relation $\Phi(x)+\Phi(-x) = 1$, for $x\in \mathbb{R}$ in the last line, since $\Phi$ is the normal cumulative distribution function. This ends the proof.
\end{proof}

The next result is the key tool in the proof of Proposition~\ref{Th:RiskBayes}.

\begin{proposition}\label{prop:bayesProba} 
Let $\eps \in ]0,1]$. For $y\in\{ 0, 1\}$, we have 
\begin{eqnarray*}
&&
\mathbb{P}( s^*(X_\bullet)  \ne  Y_\bullet \ , \ F_f^* \left( f^*(X_\bullet) \right)   \geq 1-\varepsilon \  | \  Y_\bullet = y )  
\\
& = &
\mathbb{P}  \left(    \left\{   \Phi \left(Z  \right)     +      \Phi \left( Z  -   \Vert \mu_1  -    \mu_0 \Vert_{ \Sigma^{-1}}   \right)         \right\}  \geq 2 -  \varepsilon    \ , \  Z   \geq  \frac{   \Vert \mu_1  -    \mu_0 \Vert_{ \Sigma^{-1}}   } {2}    \right),
\end{eqnarray*}
where $Z \sim \mathcal{N}   \left(  0,1 \right)    $.
\end{proposition}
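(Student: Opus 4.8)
The plan is to condition on the event $\{Y_\bullet = y\}$ and collapse the whole statement onto a single standard Gaussian variable, by feeding the two intermediate results \eqref{eq:SneY1} and Proposition~\ref{prop:interm1} into one another.

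First I would introduce, on the event $\{Y_\bullet = y\}$, the quantity
\[
Z := \frac{\left(X_\bullet - \mu_y\right)^\top \Sigma^{-1}(\mu_{1-y} - \mu_y)}{\Vert \mu_1 - \mu_0 \Vert_{\Sigma^{-1}}},
\]
which is exactly the expression appearing in the last line of \eqref{eq:SneY1} and in the statement of Proposition~\ref{prop:interm1}. Conditionally on $\{Y_\bullet = y\}$ the vector $X_\bullet - \mu_y$ is centered Gaussian with covariance $\Sigma$, so a one-line variance computation, using $\Vert \mu_{1-y} - \mu_y \Vert_{\Sigma^{-1}} = \Vert \mu_1 - \mu_0 \Vert_{\Sigma^{-1}}$, gives $\operatorname{Var}(Z \mid Y_\bullet = y) = 1$; hence $Z \sim \mathcal{N}(0,1)$ given $\{Y_\bullet = y\}$, and this is the variable called $Z$ in the statement.

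Next, by the chain of equivalences \eqref{eq:SneY1}, on $\{Y_\bullet = y\}$ the event $\{s^*(X_\bullet) \neq Y_\bullet\}$ is precisely $\{Z \geq \tfrac12 \Vert \mu_1 - \mu_0 \Vert_{\Sigma^{-1}}\}$, and on that event one also has $f^*(X_\bullet) = f_{1-y}(X_\bullet)$, since the Bayes rule selecting the wrong label means $f_y(X_\bullet) \leq 1/2$. On this event Proposition~\ref{prop:interm1} therefore applies and rewrites
\[
F_f^*\!\left(f^*(X_\bullet)\right) = F_f^*\!\left(f_{1-y}(X_\bullet)\right) = \Phi(Z) + \Phi\!\left(Z - \Vert \mu_1 - \mu_0 \Vert_{\Sigma^{-1}}\right) - 1,
\]
so the constraint $F_f^*(f^*(X_\bullet)) \geq 1-\varepsilon$ becomes $\Phi(Z) + \Phi(Z - \Vert \mu_1 - \mu_0 \Vert_{\Sigma^{-1}}) \geq 2-\varepsilon$. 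Intersecting the two conditions and taking the probability given $\{Y_\bullet = y\}$ yields the claimed formula; note it no longer depends on $y$.

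There is no genuine obstacle here: the substantive work is already carried by the two intermediate propositions. The only points requiring care are the variance computation that makes $Z$ standard normal conditionally on $\{Y_\bullet = y\}$, and the bookkeeping identity $f^*(X_\bullet) = f_{1-y}(X_\bullet)$ valid on $\{Y_\bullet = y\} \cap \{s^*(X_\bullet) \neq Y_\bullet\}$, which is what licenses the substitution of Proposition~\ref{prop:interm1} into the event $\{F_f^*(f^*(X_\bullet)) \geq 1-\varepsilon\}$.
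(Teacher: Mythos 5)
Your proposal is correct and follows essentially the same route as the paper's proof: rewriting the event $\{s^*(X_\bullet)\neq Y_\bullet\}$ via the equivalences in~\eqref{eq:SneY1} as $\{Z \geq \tfrac12\Vert\mu_1-\mu_0\Vert_{\Sigma^{-1}}\}$ together with $f^*(X_\bullet)=f_{1-y}(X_\bullet)$, substituting the expression of Proposition~\ref{prop:interm1} for $F_f^*(f_{1-y}(X_\bullet))$, and using that $Z$ is standard normal conditionally on $\{Y_\bullet=y\}$. Your explicit variance check for $Z$ is the only (harmless) addition to what the paper does.
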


\begin{proof} Let $\eps \in ]0,1]$. 
For $y\in \{0,1\}$, according to the first equivalence stated in~\eqref{eq:SneY1}, we observe that
\begin{multline}
\label{eq:transmdfz}
\mathbb{P}\left(      F_f^* \left( f^*(X_\bullet) \right)   \geq 1-\varepsilon              \ , \       s^*(X_\bullet)  
\ne  Y_\bullet  \  | \  Y_\bullet = y \right)  =
\\
\mathbb{P}\left(      F_f^* \left( f_{1-y}(X_\bullet) \right)   \geq 1-\varepsilon              \ , \       f^*(X_\bullet) 
= f_{1-y}(X_\bullet)  \  | \  Y_\bullet = y \right).
\end{multline}
Moreover, using the last equivalence in~\eqref{eq:SneY1}, we have, if $Y_\bullet = y $
\begin{equation}
\label{eq:reecrits}
		 f^*(X_\bullet) = f_{1-y}(X_\bullet) 
		\quad
		\Leftrightarrow
		 \quad
		\frac{\left( X_\bullet - \mu_y   \right)^\top  \Sigma^{-1}  (  \mu_{1-y}-  \mu_y  )}{   \Vert   \mu_1-  \mu_0    \Vert_{ \Sigma^{-1}}     }      -   \frac{1}{2}     \Vert   \mu_1-  \mu_0    \Vert_{ \Sigma^{-1}}   \geq 0 .
\end{equation}
Then we just need to rewrite the event $ \left\{F_f^* \left( f_{1-y}(X_\bullet) \right)   \geq 1-\varepsilon  \right\}$, when conditioned on the event $\{Y_\bullet = y \}$, in a convenient way. 
Using Proposition~\ref{prop:interm1}, we can write that when $Y_\bullet = y $
\begin{multline}
		F_f^*( f_{1-y}(X_\bullet)) = \Phi \left(    \frac{    \left( X_\bullet - \mu_y\right)  ^\top  \Sigma^{-1}  (  \mu_{1-y}-  \mu_y ) }{    \Vert \mu_1  -    \mu_0 \Vert_{ \Sigma^{-1}}  }      \right)      + 
 \\
		    \Phi \left(      \frac{    \left( X_\bullet - \mu_y\right)  ^\top  \Sigma^{-1}  (  \mu_{1-y}-  \mu_y  ) }{    \Vert \mu_1  -    \mu_0 \Vert_{ \Sigma^{-1}}  }  -  \Vert   \mu_1-  \mu_0 \Vert_{ \Sigma^{-1}}      \right)    - 1. \label{eq:PropGfX}
\end{multline}
Plugging~\eqref{eq:reecrits} and~\eqref{eq:PropGfX} into~\eqref{eq:transmdfz}, we finally then get
\begin{eqnarray*}
&& \mathbb{P}\left(      F_f^* \left( f(X_\bullet) \right)   \geq 1-\varepsilon              \ , \       s^*(X_\bullet)  \ne  Y_\bullet  \  | \  Y_\bullet = y \right)  
\\
& = &
 \mathbb{P}  \left(    \left\{   \Phi \left(Z_\bullet  \right)     +      \Phi \left( Z_\bullet  -   \Vert \mu_1  -    
 \mu_0 \Vert_{ \Sigma^{-1}}   \right)         \right\}  \geq 2 -  \varepsilon    \ , \  Z_\bullet   \geq  \frac{   
 \Vert \mu_1  -    \mu_0 \Vert_{ \Sigma^{-1}}   } {2}    \right),
\end{eqnarray*}
where $Z_\bullet \sim \mathcal{N}\left(0,1\right) $. The last equality is due to the fact that given $Y_\bullet =1$, the random variable $    \frac{    \left( X_\bullet - \mu_y\right)  ^\top  \Sigma^{-1}  (  \mu_{1-y}-  \mu_y  ) }{    \Vert \mu_1  -    \mu_0 \Vert_{ \Sigma^{-1}}  }  $ is normally distributed. 
We then get the desired result and the proof of the proposition is completed. 
\end{proof}

\subsubsection{Proposition~\ref{Th:RiskBayes}} 
Let $\eps \in ]0,1]$.
Since $ \mathbb{P}(Y_\bullet = 1  ) =  \mathbb{P}(Y_\bullet = 0  ) = 1/2 $, we have
\begin{multline*}
 \mathbb{P}( s^*(X_\bullet)  \ne Y_\bullet \ , \  F_f^* \left( f^*(X_\bullet) \right)  \geq 1-\varepsilon )
=
\frac{1}{2} \left\{  \mathbb{P}( s^*(X_\bullet)  \ne  Y_\bullet \ , \  F_f^* \left( f^*(X_\bullet) \right)   \geq 1-\varepsilon \ | \ Y_\bullet = 1 ) \right. 
\\
\left. + 
 \mathbb{P}( s^*(X_\bullet)   \ne Y_\bullet \ , \   F_f^* \left( f^*(X_\bullet) \right)  \geq 1-\varepsilon \ | \ Y_\bullet = 0) 
\right\}.
\end{multline*}
Next, using Proposition~\ref{prop:bayesProba}, we get
\begin{eqnarray*}
&& \mathbb{P}( s^*(X_\bullet)  \ne Y_\bullet \ , \  F_f^* \left( f^*(X_\bullet) \right)  \geq 1-\varepsilon )
\\
=  &&  
\mathbb{P}  \left(   \left\{   \Phi \left(Z   \right)     +      \Phi \left( Z  -   \Vert \mu_1  -    \mu_0 \Vert_{ \Sigma^{-1}}   \right)         \right\}  \geq 2 -  \varepsilon    \ , \  Z   \geq  \frac{   \Vert \mu_1  -    \mu_0 \Vert_{ \Sigma^{-1}}   } {2}  \right)
\\
= &&
\mathbb{P}\left(    \left\{   \Phi \left(Z   \right)     +      \Phi  \left( Z  -   \Vert \mu_1  -    \mu_0 \Vert_{ 
\Sigma^{-1}}   \right)         \right\}  \geq 2 -  \varepsilon   \right).
\end{eqnarray*}
The last equality is due to the following property: 
\begin{equation*}
Z < \frac{   \Vert \mu_1  -    \mu_0 \Vert_{ \Sigma^{-1}}   } {2}
\Rightarrow 
\Phi  \left(Z  -   \Vert \mu_1  -    \mu_0 \Vert_{ \Sigma^{-1}}    \right) < \Phi  \left( - \frac{ \Vert \mu_1  -    
\mu_0 \Vert_{ \Sigma^{-1}}}{2} \right) = 1 -  \Phi  \left(  \frac{ \Vert \mu_1  -    \mu_0 \Vert_{ \Sigma^{-1}}}{2} 
\right),
\end{equation*}
which implies that
\begin{equation*}
Z < \frac{ \Vert \mu_1  -    \mu_0 \Vert_{ \Sigma^{-1}}}{2}
\Rightarrow
\Phi \left(Z   \right)     +      \Phi  \left( Z  -   \Vert \mu_1  -    \mu_0 \Vert_{ \Sigma^{-1}}   \right) 
 < 1 \leq    2-\varepsilon.
\end{equation*}
The end of the proof is straightforward and follows from the relation $\Phi (x)+\Phi (-x) = 1,\, \forall x\in 
\mathbb{R}$. Indeed, we have
\begin{eqnarray*}
		\mathbb{P}\left(  \Phi \left(Z   \right)     +      \Phi  \left( Z  -   \Vert \mu_1  -    \mu_0 \Vert_{ \Sigma^{-1}}   \right)          
		\geq 2 -  \varepsilon \right)
& = &
		\mathbb{P}\left(  \Phi \left(- Z   \right)     +    \Phi  \left( - Z  +  \Vert \mu_1  -    \mu_0 \Vert_{ \Sigma^{-1}}   \right)       
		\leq  \varepsilon  \right)
\\ & = &
		\mathbb{P}\left( \Phi \left(Z   \right)     +    \Phi  \left( Z  +  \Vert \mu_1  -    \mu_0 \Vert_{ \Sigma^{-1}}   \right)    
		\leq  \varepsilon \right), 
\end{eqnarray*}
since $Z$ and $-Z $ equal in law. This ends the proof.


\subsection{Proof of Proposition~\ref{propo:presqconfset}}

We first define the following events
\begin{eqnarray*}
\mathcal{A}_y & = & \{f^*(\Xp) \geq \alpha_{\varepsilon},  \hat{f}(\Xp) < \hat{\alpha}_{\varepsilon}, s^{*}(\Xp) \neq y\},\;\; y= 0,1 \\
\mathcal{B}_y & = & \{f^*(\Xp) < \alpha_{\varepsilon},  \hat{f}(\Xp) \geq \hat{\alpha}_{\varepsilon},  \hat s(\Xp) \neq y\},\;\; y = 0,1.\\
\mathcal{C}_y & = &  \{f^*(\Xp) \geq \alpha_{\varepsilon}, \hat{f}(\Xp) \geq \hat{\alpha}_{\varepsilon}  ,  s^{*}(\Xp) \neq \hat s(\Xp), s^{*}(\Xp) \neq y\}, y = 0,1.
\end{eqnarray*}
Since $ \mathcal{P}(\widetilde{\Gamma}_\varepsilon^\bullet)  =  \varepsilon$, we can apply Proposition~2 and then, as 
\begin{equation*}
|2\eta^*(\Xp) - 1| \leq |\eta^*(\Xp) - \alpha_{\eps}| + |1-\eta^*(\Xp) - \alpha_{\eps}|,
\end{equation*} 
we deduce that
\begin{multline}
\label{eq:decomp0}
\mathbf{R} \left(\widetilde{\Gamma}_{\varepsilon}^{\bullet}\right) - \mathcal{R}\left(\Gamma_{\varepsilon}^{\bullet}\right) \leq \\
\dfrac{1}{\varepsilon}\{\mathbf{E}\left[|\eta^*(\Xp) - \alpha_{\eps}| \1_{\mathcal{A}_0 \cup \mathcal{B}_0 \cup \mathcal{C}_0 \cup \mathcal{C}_1}\right] + \mathbf{E}\left[|1 
- \eta^*(\Xp) - \alpha_{\eps}| \1_{\mathcal{A}_1 
\cup \mathcal{B}_1 \cup \mathcal{C}_0 \cup \mathcal{C}_1}\right]\}.
 \end{multline}
Now, 
\begin{enumerate}
\item on $\mathcal{A}_0$, $f^* = \eta^*$, $\eta^*(\Xp) \geq \alpha_{\eps}$ and $\hat{f}(\Xp) < \hat{\alpha}_{\eps}$,\\
hence, we have $|\eta^*(\Xp) - \alpha_{\eps}| \leq |\hat{\eta}(\Xp) - \eta^*(\Xp)|$ except if $\alpha_{\eps} \leq \hat{\alpha}_{\eps}$ and 
$\hat{f}(\Xp) \in (\alpha_{\eps}, \hat{\alpha}_{\eps})$;
\item on $\mathcal{B}_0$, $\hat{f} = \hat{\eta}$, $\hat{\eta}(\Xp) \geq \hat{\alpha}_{\eps}$ and $f^*(\Xp) < \alpha_{\eps}$,\\
hence, we have $|\eta^*(\Xp) - \alpha_{\eps}| \leq |\hat{\eta}(\Xp) - \eta^*(\Xp)|$ except if $\hat{\alpha}_{\eps} \leq \alpha_{\eps}$ and $\hat{f}(\Xp) \in 
(\hat{\alpha}_{\eps}, \alpha_{\eps})$;
\item on $\mathcal{C}_0$, $f^* = \eta^*$, $\hat{f} = 1-\hat{\eta}$, $\eta^*(\Xp) \geq \alpha_{\eps}$ and $\hat{\eta}(\Xp) \leq 1/2$,\\
hence, we always have $|\eta^*(\Xp) - \alpha_{\eps}| \leq |\hat{\eta}(\Xp) - \eta^*(\Xp)|$;
\item on $\mathcal{C}_1$, $f^* = 1-\eta^*$, $\hat{f} = \hat{\eta}$ and $\hat{\eta}(\Xp) \geq \hat{\alpha}_{\eps}$,\\
hence, we have $|\eta^*(\Xp) - \alpha_{\eps}| \leq |\hat{\eta}(\Xp) - \eta^*(\Xp)|$ except if $\hat{\alpha}_{\eps} \leq \alpha_{\eps}$ and $\hat{f}(\Xp) \in 
(\hat{\alpha}_{\eps}, \alpha_{\eps})$.
\end{enumerate}
Since $\mathcal{A}_0, \mathcal{B}_0, \mathcal{C}_0$ and $\mathcal{C}_1$ are mutually exclusive events, we deduce
\begin{multline}
\label{eq:decomp1}
\mathbf{E}\left[|\eta^*(\Xp) - \alpha_{\eps}| \1_{\mathcal{A}_0 \cup \mathcal{B}_0 \cup \mathcal{C}_0 \cup \mathcal{C}_1}\right] \leq
\mathbf{E}\left[|\eta^*(\Xp) - \alpha_{\eps}|\1_{\{|\hat{\eta}(\Xp) - \eta^*(\Xp)| \geq |\eta^*(\Xp) - \alpha_{\eps}|\}}\right] + \\
\mathbf{E}\left[|\eta^*(\Xp) - \alpha_{\eps}|
\left(\1_{\{\mathcal{A}_0,\alpha_{\eps} \leq \hat{\alpha}_{\eps}, \hat{f}(\Xp) \in (\alpha_{\eps}, \hat{\alpha}_{\eps})\}} 
+ \1_{\{\mathcal{B}_0 \cup \mathcal{C}_1, \hat{\alpha}_{\eps} \leq \alpha_{\eps}, \hat{f}(\Xp) \in (\hat{\alpha}_{\eps}, {\alpha}_{\eps})\}}\right) \right].
\end{multline}
%
In the same way, we obtain the following decomposition
\begin{multline}
\label{eq:decomp2}
\mathbf{E}\left[|1 - \eta^*(\Xp) - \alpha_{\eps}| \1_{\mathcal{A}_1 \cup \mathcal{B}_1 \cup \mathcal{C}_0 \cup \mathcal{C}_1}\right] \leq
\mathbf{E}\left[|1 - \eta^*(\Xp) - \alpha_{\eps}|\1_{\{|\hat{\eta}(\Xp) - \eta^*(\Xp)| \geq |1 - \eta^*(\Xp) - \alpha_{\eps}|\}}\right] + \\
\mathbf{E}\left[|1 - \eta^*(\Xp) - \alpha_{\eps}|
\left(\1_{\{\mathcal{A}_1,\alpha_{\eps} \leq \hat{\alpha}_{\eps}, \hat{f}(\Xp) \in (\alpha_{\eps}, \hat{\alpha}_{\eps})\}} 
+ \1_{\{\mathcal{B}_1 \cup \mathcal{C}_0, \hat{\alpha}_{\eps} \leq \alpha_{\eps}, \hat{f}(\Xp) \in (\hat{\alpha}_{\eps}, {\alpha}_{\eps})\}}\right) \right].
\end{multline}
Since $(\mathcal{A}_y,\mathcal{B}_y,\mathcal{C}_y), y = 0,1$ are mutually exclusive events, and that
 $|\eta^*(\Xp) - \alpha_{\eps}| \leq \alpha_{\eps}$ and $|1 - \eta(\Xp) - \alpha_{\eps}| \leq \alpha_{\eps}$, it derives from Inequalities~\eqref{eq:decomp0},
 \eqref{eq:decomp1} and~\eqref{eq:decomp2} that
\begin{multline*}
\mathbf{R} \left(\widetilde{\Gamma}_{\varepsilon}^{\bullet}\right) - \mathcal{R}\left(\Gamma_{\varepsilon}^{\bullet}\right) \leq \dfrac{1}{\varepsilon} \\
\{\mathbf{E}\left[ | \eta^*(\Xp) - \alpha_{\eps}|\1_{\{|\hat{\eta}(\Xp) - \eta^*(\Xp)| \geq |\eta^*(\Xp) - \alpha_{\eps}|\}}\right] + \\
\mathbf{E}\left[|1 - \eta^*(\Xp) - \alpha_{\eps}|\1_{\{|\hat{\eta}(\Xp) - \eta^*(\Xp)| \geq |1 - \eta^*(\Xp) - \alpha_{\eps}|\}}\right] + \\
\alpha_{\eps} |F_{\hat{f}}(\hat{\alpha}_{\eps}) - F_{\hat{f}}(\alpha_{\eps})|\}.
\end{multline*}
To conclude the proof, it remains to note that $1 - \eps  = F_{\hat{f}}(\hat{\alpha}_{\eps}) = F^*_{{f}}({\alpha}_{\eps})$, for all $\eps \in ]0,1]$.


\subsection{Proof of Theorem~\ref{thm:mainThm}}
We first set a Lemma that will be used in the proof.
\subsubsection{Tool lemma}
The following lemma is inspired by Lemma~3.1 in~\cite{AT07}.
\begin{lm}
\label{lem:lemUtile}
Let $X$ be a real random variable, $(X_n)_{n \geq 1}$ a be sequence of real random variables and $t_0 \in \mathbb{R}$. 
Assume that there exist $C_1 < \infty$ and $\gamma_0 > 0$ such that
\begin{equation*}
\mathbb{P}_{X} \left(|X - t_0 | \leq \delta  \right) \leq  C_1 \delta^{\gamma_0}, \qquad \forall \delta>0,
\end{equation*}
and a sequence of positive numbers $a_n \rightarrow +\infty$, $C_2, C_3$ some positive constants such that
\begin{equation*}
\mathbb{P}_{X_n}\left(|X_n - X| \geq \delta | X \right) \leq C_2 \exp\left(- C_3 a_n \delta^{2} \right), \qquad \forall \delta > 0,\ \forall n\in \mathbb{N}.
\end{equation*}
Then, there exists $C > 0$ depending only on $C_1,C_2$ and $C_3$, such that
\begin{eqnarray*}
\left|\mathbf{E}\left[\1_{\{X_n \geq t_0\}} - \1_{\{X \geq t_0\}} \right]\right| & \leq & \mathbf{E}\left[\left|\1_{\{X_n \geq t_0\}} - \1_{\{X \geq \alpha_{\eps}\}} \right| \right] \\ 
& \leq & \mathbf{P}\left(|X_n - X| \geq |X - t_0|\right) \\
& \leq & C a_n^{-\gamma_{0}/2}. 
\end{eqnarray*}
\end{lm}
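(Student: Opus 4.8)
The plan is to prove the three inequalities of the display in order, the last being the only one with real content. The first, $\left|\mathbf{E}[Z]\right|\le\mathbf{E}[|Z|]$ for the integrable variable $Z=\1_{\{X_n\ge t_0\}}-\1_{\{X\ge t_0\}}$ (the middle term of the statement writes $\alpha_{\eps}$ for $t_0$; I read it as $t_0$), is just the triangle inequality. For the second, I would note that $Z$ takes a nonzero value only when $X_n$ and $X$ lie strictly on opposite sides of $t_0$: on $\{X\ge t_0>X_n\}$ one has $|X_n-X|=X-X_n> X-t_0=|X-t_0|$, and symmetrically on $\{X_n\ge t_0>X\}$ one has $|X_n-X|=X_n-X\ge t_0-X=|X-t_0|$. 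Hence $|Z|\le\1_{\{|X_n-X|\ge|X-t_0|\}}$ pointwise, and taking expectations gives the middle bound.

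For the third inequality I would first apply the margin hypothesis with $\delta\downarrow 0$ to deduce $\mathbf{P}(X=t_0)=0$, so that $W:=|X-t_0|$ is almost surely positive. Conditioning on $X$ and invoking the deviation hypothesis with the (conditionally deterministic, a.s.\ positive) value $\delta=W$ gives $\mathbf{P}(|X_n-X|\ge W\mid X)\le C_2\,e^{-C_3 a_n W^2}$ almost surely; integrating over $X$ reduces the task to bounding $\mathbf{E}\!\left[e^{-C_3 a_n W^2}\right]$ by a constant multiple of $a_n^{-\gamma_0/2}$.

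This last estimate is the step I expect to require the most care: the naive split over $\{W\le r_n\}$ and $\{W>r_n\}$ forces $r_n$ of order $\sqrt{(\log a_n)/a_n}$ and loses a spurious $(\log a_n)^{\gamma_0/2}$ factor. Instead I would integrate the layer-cake identity $e^{-C_3 a_n w^2}=\int_w^\infty 2C_3 a_n u\,e^{-C_3 a_n u^2}\,du$ and use Tonelli together with the margin bound $\mathbf{P}(W\le u)\le C_1 u^{\gamma_0}$:
\begin{align*}
\mathbf{E}\!\left[e^{-C_3 a_n W^2}\right]
&= \int_0^\infty 2C_3 a_n u\,e^{-C_3 a_n u^2}\,\mathbf{P}(W\le u)\,du \\
&\le 2C_1 C_3 a_n \int_0^\infty u^{\gamma_0+1} e^{-C_3 a_n u^2}\,du .
\end{align*}
The substitution $v=\sqrt{C_3 a_n}\,u$ then collapses the right-hand side to $2C_1 C_3^{-\gamma_0/2}\big(\int_0^\infty v^{\gamma_0+1}e^{-v^2}\,dv\big)\,a_n^{-\gamma_0/2}$, and the remaining integral equals the finite constant $\tfrac12\,\Gamma\!\big(\tfrac{\gamma_0+2}{2}\big)$. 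Chaining the three steps yields the asserted bound with $C=C_1 C_2 C_3^{-\gamma_0/2}\,\Gamma\!\big(\tfrac{\gamma_0+2}{2}\big)$, which depends only on $C_1,C_2,C_3$ and the fixed exponent $\gamma_0$. Everything apart from this computation is routine bookkeeping.
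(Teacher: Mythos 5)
Your proof is correct, and for the only substantive step it takes a genuinely different route from the paper. The first two inequalities (triangle inequality and the pointwise bound $|\1_{\{X_n \geq t_0\}} - \1_{\{X \geq t_0\}}| \leq \1_{\{|X_n - X| \geq |X - t_0|\}}$) coincide with the paper's argument, and you are right that the $\alpha_\eps$ in the middle term is a typo for $t_0$. For the bound $\mathbf{P}(|X_n - X| \geq |X - t_0|) \leq C a_n^{-\gamma_0/2}$, the paper follows the Audibert--Tsybakov peeling device: it splits the event over the ball $\{|X - t_0| \leq \delta\}$ and the dyadic annuli $\{2^{j-1}\delta < |X - t_0| \leq 2^{j}\delta\}$, applies the margin bound to each annulus and the deviation bound at scale $2^{j-1}\delta$, sums the resulting geometric-type series, and only then optimizes by choosing $\delta = a_n^{-1/2}$. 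You instead condition on $X$, plug the (conditionally deterministic, a.s.\ positive) radius $W = |X - t_0|$ into the deviation bound to reduce everything to $C_2\,\mathbf{E}\bigl[e^{-C_3 a_n W^2}\bigr]$, and evaluate that expectation exactly via the layer-cake identity, Tonelli, and the margin bound, ending with a Gamma integral. Both routes give the same rate with no logarithmic loss; yours yields the clean explicit constant $C = C_1 C_2 C_3^{-\gamma_0/2}\,\Gamma\bigl(\tfrac{\gamma_0+2}{2}\bigr)$ and avoids the bookkeeping of the dyadic sum, while the paper's peeling has the advantage of only ever invoking the deviation hypothesis at deterministic scales $2^{j-1}\delta$, whereas your substitution $\delta = W$ needs the (standard, easily supplied) remark that the conditional bound holds simultaneously for all $\delta > 0$ outside a single null set, by monotonicity in $\delta$ applied along a countable dense set. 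Your preliminary observation that the margin condition forces $\mathbf{P}(X = t_0) = 0$ is exactly the right way to make that substitution legitimate.
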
 

\begin{proof}
The following inequality holds
\begin{equation*}
\left| \1_{\{X_n \geq t_0\}} - \1_{\{X \geq t_0\}} \right| \leq \1_{\{|X_n - X| \geq |X-t_0|\}}.
\end{equation*}
Hence, it remains to prove
\begin{equation*}
 \mathbf{P}\left(|X_n - X| \geq |X-t_0|\right) \leq C a_n^{-\gamma_0/2}.
\end{equation*} 
We define, for $\delta > 0$,
\begin{eqnarray*}
\mathbf{A}_0 & = & \{\left|X-t_0\right| \leq \delta\}\\
\mathbf{A}_j & = & \{2^{j-1}\delta  < |X-t_0| \leq 2^{j} \delta\} , \;\; j \geq 1.
\end{eqnarray*}
Since the events $(\mathbf{A}_j)_{j\geq 0}$ are mutually exclusive, we deduce 
\begin{eqnarray*}
\mathbf{P}\left( |X_n - X| \geq |X-t_0|\right) & = & \sum_{j \geq 0} \mathbf{E}\left[\1_{\{|X_n - X| \geq |X-t_0|\}}\right] \1_{\mathbf{A}_j}\\
& \leq & \mathbb{P}_{X}\left(|X-t_0| \leq \delta \right) + \sum_{j \geq 1} \mathbf{E}\left[\1_{\{|X_n-X| \geq    2^{j-1}  \delta  \}} \1_{\mathbf{A}_j}\right]\\
& \leq & C_1 \delta^{\gamma_0}  + \sum_{j \geq 1} \mathbb{E}_{X} \left[\mathbb{P}_{X_n}\left(|X_n - X| \geq 2^{j-1}  \delta \  | X\right) \1_{\mathbf{A}_j} \right] \\
& \leq & C_1 \delta^{\gamma_0} + C_1C_2 \delta^{\gamma_0} \sum_{j \geq 1} 2^{j\gamma_0} \exp\left(-C_2 a_n 2^{2j-2} \delta^2 \right), 
\end{eqnarray*}
since $ \mathbb{P}_{X} (  \mathbf{A}_j ) \leq \mathbb{P}_{X}(|X-t_0| \leq 2^{j} \delta) \leq (2^j\delta)^{\gamma_0}$.
Therefore, choosing $\delta = a_n^{-1/2}$, we obtain from the above inequality,
\begin{eqnarray*}
\mathbf{P}\left(|X_n - X| \geq |X-t|\right)  & \leq & C_1 a_n^{-\gamma_0 / 2} +2  C_1C_3 a_n^{-\gamma_0 / 2} \sum_{j \geq 1} 2^{j\gamma_0} \exp(-C_2 2^{2j-2}) \\
& \leq & C a_n^{-\gamma_0 /2}, 
\end{eqnarray*}
for a constant $C > 0$.
\end{proof}


\subsubsection{Theorem~\ref{thm:mainThm}}
Let $\eps \in ]0,1[$
We first prove that for $N$ large enough
\begin{equation}
\label{finaleq}
\left|\mathbf{P}\left(\hat{F}_{\hat{f}}(\hat{f}(\Xp)) \geq 1 - \eps\right) - \mathbf{P}\left({F}_{\hat{f}}(\hat{f}(\Xp)) \geq 1 - \eps\right)\right|
\leq \dfrac{\tilde{C}}{\sqrt{N}}, 
\end{equation}
and
\begin{equation}
\label{eq:firstStep}
\left|\mathbf{R} \left(\widehat{\Gamma}_{\varepsilon}^{\bullet}\right) - \mathbf{R} \left(\widetilde{\Gamma}_{\varepsilon}^{\bullet}\right)\right|
 \leq \dfrac{C}{\sqrt{N}}, 
\end{equation}
where $C, \tilde{C} > 0$ are constants which do not depend on $n$.
For all $x \in[1/2,1]$, 
\begin{equation*}
F_{\hat{f}}(x) = \mathbb{E}_{\mathcal{D}_n}\left[\mathbb{P}\left(\hat{f}(X) \leq x | \mathcal{D}_{n}\right)\right],
\end{equation*}
Hence, conditional on $\mathcal{D}_n$, $\hat{F}_{\hat{f}}(x)$ is the empirical cumulative distribution function
of $\hat{f}(X)$, where $\hat{f}$ is view as a deterministic function.
Therefore, for all $\gamma \geq \sqrt{\log(2)/2N}$, Dvoretsky-Kiefer-Wolfowitz Inequality yields
\begin{eqnarray*}
\mathbb{P}_{\mathcal{D}_N}\left(|\hat{F}_{\hat{f}}(\hat{f}(\Xp)) - F_{\hat{f}}(\hat{f}(\Xp))| \geq \gamma | \mathcal{D}_n, \Xp \right)  & \leq &
\mathbb{P}_{\mathcal{D}_N}\left(\sup_{x \in [1/2,1]} \left|\hat{F}_{\hat{f}}(x)  - U_n(x) \right| \geq \gamma | \mathcal{D}_n \right)\\
& \leq & 2 \exp(-2N \gamma^2),
\end{eqnarray*}
where $U_n(x) = \mathbb{P}\left(\hat{f}(\Xp) \leq x | \mathcal{D}_n\right)$.\\
Applying Lemma~\ref{lem:lemUtile}, we get
\begin{equation*}
\left|\mathbb{E}_{(\mathcal{D}_N,\Xp)}\left[\1_{\{\hat{F}_{\hat{f}}(\hat{f}(\Xp)) \geq 1 - \eps\}} -
 \1_{\{{F}_{\hat{f}}(\hat{f}(\Xp)) \geq 1 - \eps\}}|\mathcal{D}_n \right] \right|
  \leq  \dfrac{\tilde{C}}{\sqrt{N}},
\end{equation*}
where $\tilde{C}$ does not depend on $n$. Hence, we obtain Inequality~\eqref{finaleq}.
In the same way, we have
\begin{equation*}
\left|\mathbf{P}\left(\hat{F}_{\hat{f}}(\hat{f}(\Xp)) \geq 1 - \eps, \hat{s}(\Xp) \neq \Yp\right) - \mathbf{P}\left({F}_{\hat{f}}(\hat{f}(\Xp)) \geq 1 - \eps, \hat{s}(\Xp) 
\neq \Yp \right)\right|
\leq \dfrac{\tilde{C}}{\sqrt{N}}.
\end{equation*}

Therefore, Inequality~\eqref{eq:firstStep} holds for some constant $C>0$.

Since, by Assumption (A2) $\mathbf{P}\left({F}_{\hat{f}}(\hat{f}(\Xp)) \geq 1 - \eps\right) = \varepsilon$, Inequality~\eqref{finaleq} yields
\begin{equation*}
\mathbf{P} \left(\hat{F}_{\hat{f}}(\hat{f}(\Xp))  \geq 1- 
		\varepsilon \right)
		=    \eps + O(N^{-1/2}).
\end{equation*}

Now, we conclude the point $1)$ of the theorem. Since Inequality~\eqref{eq:firstStep} ensures that 
\begin{equation*}
\left|\mathbf{R} \left(\widehat{\Gamma}_{\varepsilon}^{\bullet}\right) - \mathbf{R} \left(\widetilde{\Gamma}_{\varepsilon}^{\bullet}\right)\right| \rightarrow 0,
\;\; n,N \to +\infty,
\end{equation*}
it remains to prove that 
\begin{equation*}
\left|\mathbf{R} \left(\widetilde{\Gamma}_{\varepsilon}^{\bullet}\right) - \mathcal{R} \left({\Gamma}_{\varepsilon}^{\bullet}\right)\right| \rightarrow 0,
\;\; n \to +\infty.
\end{equation*}
Applying Proposition~3, we obtain for $\delta_n >0, \;\; \delta_n \to 0$
\begin{equation*}
\left|\mathbf{R} \left(\widetilde{\Gamma}_{\varepsilon}^{\bullet}\right) - \mathcal{R} \left({\Gamma}_{\varepsilon}^{\bullet}\right)\right| \leq
2\delta_n + 2\mathbf{P}\left(|\hat{\eta}(\Xp) - \eta^*(\Xp)| \geq \delta_n \right) + \left|F_{\hat{f}}(\alpha_{\varepsilon}) - F_{f}^*(\alpha_{\varepsilon}) \right|.
\end{equation*}
Since, $\hat{\eta}(\Xp) \rightarrow \eta^*(\Xp)$ in probability when $n \rightarrow +\infty$,
$\hat{f}(\Xp) \to f^*(\Xp)$ in distribution and $\left|F_{\hat{f}}(\alpha_{\varepsilon}) - F_{f}^*(\alpha_{\varepsilon}) \right| \to 0$.
Moreover, $\mathbf{P}\left(|\hat{\eta}(\Xp) - \eta^*(\Xp)| \geq \delta_n \right) \to 0$ which concludes the point $1)$ of the proof.

Finally, to prove $2)$, it remains to show that 
\begin{equation*}
\left|\mathbf{R} \left(\widetilde{\Gamma}_{\varepsilon}^{\bullet}\right) - \mathcal{R} \left({\Gamma}_{\varepsilon}^{\bullet}\right)\right| = O(a_n^{-\gamma_{\eps}/2}), 
\end{equation*}
We first note that,
\begin{eqnarray*}
\left|{F}_{\hat{f}}(\alpha_{\eps}) - F_f^*(\alpha_{\eps}) \right| & \leq & \mathbf{E} \left[|\1_{\{\hat{f}(\Xp) \geq \alpha_{\eps}\}} -\1_{\{{f}^*(\Xp) \geq \alpha_{\eps}\}}| 
\right]\\
& \leq & \mathbf{E} \left[|\1_{\{\hat{f}(\Xp) \geq \alpha_{\eps}\}} -\1_{\{{f}^*(\Xp) \geq \alpha_{\eps}\}}|\1_{\{|\hat{\eta}(\Xp) - \eta^*(\Xp)| \geq |\eta^*(\Xp) - 1/2| \}} 
\right]\\
&  & \;\; + \mathbf{E} \left[|\1_{\{\hat{f}(\Xp) \geq \alpha_{\eps}\}} -\1_{\{{f}^*(\Xp) \geq \alpha_{\eps}\}}|\1_{\{|\hat{\eta}(\Xp) - \eta^*(\Xp)| < |\eta^*(\Xp) - 1/2| \}} 
\right]\\
& \leq & \mathbf{P}\left(|\hat{\eta}(\Xp) - \eta^*(\Xp)| \geq |\eta^*(\Xp) - \alpha_{\eps}| \right) \\
&  & \;\; + \mathbf{P}\left(|\hat{\eta}(\Xp) - \eta^*(\Xp)| \geq |\eta^*(\Xp) - (1 - \alpha_{\eps})| \right) \\
&  & \;\;\;\;\; + \mathbf{E} \left[|\1_{\{\hat{\eta}(\Xp) \geq \alpha_{\eps}\}} -\1_{\{\eta^*(\Xp) \geq \alpha_{\eps}\}}| \right].\\
\end{eqnarray*}
Therefore, applying both Proposition~3, Lemma~\ref{lem:lemUtile} and the above inequality, we get the desired result.


\bibliographystyle{alpha}      
\bibliography{Conformal_lasso3}

\end{document}